\newtheorem{theorem}{Theorem}[section]
\newtheorem{lemma}[theorem]{Lemma}
\newtheorem{corollary}[theorem]{Corollary}
\theoremstyle{definition}
\newtheorem{definition}[theorem]{Definition}
\newtheorem{question}[theorem]{Question}
\newtheorem{claim}{Claim}
\def\N{\mathbb{N}}
\def\Z{\mathbb{Z}}
\def\Z{\mathbb{Z}}
\def\Q{\mathbb{Q}}
\def\F{\mathbb{F}}
\def \Zet[#1]{\lceil #1 \rceil}
\def\supp{\mathrm{supp}}
\def\structure#1{\langle{#1}\rangle}
\def\cont{\mathfrak{c}}
\begin{document}
\title[Selectively pseudocompact groups without convergent sequences]
{Selectively pseudocompact groups without non-trivial convergent sequences}

\author[D. Shakhmatov]{Dmitri Shakhmatov}
\address{Division of Mathematics, Physics and Earth Sciences\\
Graduate School of Science and Engineering\\
Ehime University, Matsuyama 790-8577, Japan}
\email{dmitri.shakhmatov@ehime-u.ac.jp}
\thanks{The first listed author was partially supported by the Grant-in-Aid for Scientific Research~(C) No.~26400091 of the Japan Society for the Promotion of Science (JSPS)}

\author[V. Ya\~nez]{V\'{\i}ctor Hugo Ya\~nez}
\address{Master's Course, Graduate School of Science and Engineering\\
Ehime University, 
Matsuyama 790-8577, Japan}
\email{victor\textunderscore yanez@comunidad.unam.mx}
\thanks{This paper was written as part of the second listed author's Master's Program at the Graduate School of Science and Engineering of Ehime University. The second listed author was partially supported by the 2016 fiscal year grant of the Matsuyama Saibikai.}

\subjclass[2010]{Primary: 22A05; Secondary: 54A20, 54D30, 54H11}

\keywords{pseudocompact space, strong pseudocompactness,
$p$-compactness,
selective sequential pseudocompactness, non-trivial convergent sequence, free precompact Boolean group}

\begin{abstract}
The existence of 
a countably compact group without non-trivial convergent sequences in ZFC alone is a major open problem in 
topological group theory.
We give a ZFC example of a Boolean topological group $G$ without non-trivial convergent sequences having the following ``selective'' compactness property: For each free ultrafilter
$p$ on $\N$ and every sequence $\{U_n:n\in\N\}$ of non-empty open subsets of $G$ one can choose a point $x_n\in U_n$ for all $n\in\N$ in such a way that the resulting sequence $\{x_n:n\in\N\}$ has a $p$-limit in $G$; that is, $\{n\in\N: x_n\in V\}\in p$ for every 
neighbourhood $V$ of $x$ in $G$.
In particular, $G$ is selectively pseudocompact (strongly pseudocompact) 
but
not selectively sequentially pseudocompact. This answers a question of Dorantes-Aldama and the first author.
As a by-product, we show that the free precompact Boolean group over any disjoint sum of maximal countable spaces  
contains no infinite compact subsets.
\end{abstract}

\maketitle

As usual, $\N$ denotes the set of natural numbers, $\omega$ denotes the first infinite cardinal.
We freely identify $\N$ with $\omega$.
We use $\Z_2$ to denote the unique group with two elements.

\section{Results}

Let $p$ be a free ultrafilter on $\N$. Recall  that a point $x$ of a topological space $X$ is a {\em $p$-limit\/} of a sequence 
$\{x_n:n\in\N\}$ of points of $X$ provided that 
$\{n\in\N: x_n\in V\}\in p$ for every neighbourhood $V$ of $x$ in $X$ \cite{Bern}.

The next notion is due to Angoa, Ortiz-Castillo and Tamariz-Mascar\'{u}a
\cite{AOCTM}.

\begin{definition}
Let $p$ be a free ultrafilter on $\N$.
A space $X$ is {\em strongly $p$-pseudocompact\/}
if it has the following property:
For every sequence $\{U_n:n\in\N\}$ of non-empty open subsets of $X$ one can choose a point $x_n\in U_n$ for all $n\in\N$ in such a way that the resulting sequence $\{x_n:n\in\N\}$ has a $p$-limit in $X$. 
\end{definition}

The symbol $\beta\N$ denotes the Stone-\v Cech compactification of $\N$. Recall that $\beta\N\setminus\N$
can be identified with the set of all free ultrafilters on $\N$.

Given a non-empty subset $P$ of $\beta\N\setminus\N$, we shall say that a space $X$ is {\em strongly $P$-pseudocompact\/}
provided that $X$ is strongly $p$-pseudocompact for each $p\in P$.

\begin{theorem}
\label{main:theorem}
Let $\kappa$ be an infinite cardinal such that $\kappa^\omega=\kappa$
and $P$ be a non-empty subset of $\beta\N\setminus\N$ satisfying
$|P|\le\kappa$.
Then there exists a dense 
strongly
$P$-pseudocompact subgroup 
of $\Z_2^\kappa$
without non-trivial convergent sequences.
\end{theorem}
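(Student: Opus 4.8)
The plan is to construct $G$ by a transfinite recursion of length $\kappa$, as the union of an increasing continuous chain $\{G_\alpha:\alpha<\kappa\}$ of countable subgroups of the compact Boolean group $K=\Z_2^\kappa$. A few preliminary remarks make the bookkeeping fit: $\kappa^\omega=\kappa$ forces $\kappa\ge\cont$ and $\cf(\kappa)>\omega$, so every countable subset of $\bigcup_{\alpha<\kappa}G_\alpha$ already lies in some $G_\alpha$; there are only $\kappa$ non-empty basic open subsets of $K$ and only $\kappa$ sequences of such; and $|P|\le\kappa$. Hence one can fix an enumeration in which, cofinally often, we are handed a pair $(p,\langle U_n:n\in\N\rangle)$ with $p\in P$ and each $U_n$ a non-empty basic open subset of $K$, and, cofinally often, a one-to-one $\omega$-sequence whose range lies in the part of $G$ built so far. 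It will also be convenient to keep, in reserve, pairwise disjoint blocks of coordinates of $\kappa$, one block of size $\cont$ devoted to each stage of the recursion.

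During the recursion we meet three kinds of requirements. (D) For every non-empty basic open $U\subseteq K$ we put an element of $G$ into $U$; this makes $G$ dense. (P) When handed $(p,\langle U_n\rangle)$ at stage $\alpha$ we choose points $x_n\in U_n$, adjoin them (and the subgroup they generate over $G_\alpha$) to $G_{\alpha+1}$, and also adjoin the $p$-limit $x=p\text{-}\lim_n x_n$, computed coordinatewise in the compact group $K$; since every sequence of non-empty open subsets of $G$ refines to a sequence of non-empty basic open subsets of $K$ (using density), this secures strong $p$-pseudocompactness for each $p\in P$. (C) No non-trivial sequence in $G$ converges in $G$ — equivalently, whenever a one-to-one $\omega$-sequence from some $G_\alpha$ converges in $K$, its limit is not in $G$.

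The crux is to satisfy (C) while servicing (P): the points that (P) \emph{forces} into $G$ are cluster points of sequences, and nothing prevents a careless choice of the selectors $x_n$, or an accidental coincidence of limit points adjoined at different stages, from closing up a genuine non-trivial convergent sequence. To forestall this, the new generators are built into \emph{maximal countable spaces}: when (P) is serviced at stage $\alpha$ we arrange that, on the coordinate block reserved for stage $\alpha$, the newly adjoined elements realise a topological copy of a countable maximal space $X_\alpha$ whose unique cluster point in $K$ is the prescribed $p$-limit $x$, and we do the analogous packaging for the points added for (D). Since $X_\alpha$ is maximal it has no non-trivial convergent sequence of its own, and since the blocks are pairwise disjoint the configurations produced at different stages are ``independent,'' so that the whole generating configuration of $G$ is a topological copy of a disjoint sum of countably many [indeed $\kappa$ many] maximal countable spaces. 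A hypothetical non-trivial convergent sequence in $G$, together with its limit, would then be supported on a \emph{countable} sub-configuration, itself a disjoint sum of (pieces of) maximal countable spaces, on which — and this is where the $\cont$-sized blocks and the freedom afforded by $\kappa^\omega=\kappa$ are used — the topology induced from $K$ can be arranged to coincide with the free precompact Boolean topology. But the free precompact Boolean group over a disjoint sum of maximal countable spaces has no infinite compact subsets (the by-product announced in the abstract), so no such sequence exists. I expect this last step — laying out the coordinates of $\kappa$ so that, for every countable piece of the generating set, the characters of $\Z_2^\kappa$ available there induce the full free precompact topology, thereby transporting ``no infinite compact subsets'' down to $G$ — to be the main obstacle of the proof.

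Once the recursion is complete, $G=\bigcup_{\alpha<\kappa}G_\alpha$ is a subgroup of $\Z_2^\kappa$; clause (D) makes it dense, clause (P) makes it strongly $P$-pseudocompact, and clause (C), underwritten by the maximal-spaces argument together with the free precompact Boolean group fact, guarantees that $G$ has no non-trivial convergent sequences, as required.
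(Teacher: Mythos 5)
Your high-level picture is essentially the one the paper itself uses to interpret its construction: the generators of $G$ are arranged as a topological copy of a disjoint sum of countable maximal (``elementary'') spaces $E_{p,\alpha}=\{(p,\alpha,n):n\le\omega\}$, the point at $\omega$ being the adjoined $p$-limit that services strong $p$-pseudocompactness, and the absence of non-trivial convergent sequences is exactly the statement that the free precompact Boolean group over such a sum has none. But the step you defer as ``the main obstacle'' is not bookkeeping to be arranged later; it is the entire technical content of the theorem, and the proposal contains no idea for it. Concretely, to kill a candidate convergent sequence $\{g_m\}$ in $G$ you need a single coordinate $\beta<\kappa$ on which the projection takes both values $0$ and $1$ infinitely often along $\{g_m\}$; writing $g_m=\sum_{x\in E_m}z_x$ in the generators, that projection is the homomorphism $\tilde{f_\beta}$ induced by the restriction $f_\beta$ of the $\beta$-th projection to the generating set, and for the $p$-limits adjoined in your requirement (P) to survive at coordinate $\beta$ this $f_\beta$ must be \emph{continuous} on the disjoint sum of elementary spaces, i.e.\ coherent: $\{n: f_\beta(p,\alpha,n)=f_\beta(p,\alpha,\omega)\}\in p$ for all $(p,\alpha)$. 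So the crux is: \emph{every infinite subset of $B(X)$, $X=P\times K\times(\omega+1)$, is split by a coherent map $X\to\Z_2$}. This is Theorem~\ref{coherent:split}, proved via a genericity argument over the poset of finite partial coherent maps (Lemma~\ref{2.9}), and nothing in your proposal substitutes for it. Invoking the ``by-product'' announced in the abstract is circular: that by-product is itself deduced from the same splitting theorem.

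There are also two structural problems with the recursive architecture. A splitting coordinate for a sequence whose supports $E_m$ involve generators from infinitely many stages cannot live inside any single stage's reserved block, so ``pairwise disjoint blocks, one per stage'' does not by itself supply the coordinates you need; the paper avoids this by fixing the abstract generating set $X=P\times\kappa\times(\omega+1)$ and one coherent splitting map $f_\beta$ for each countable subset of $B(X)$ \emph{in advance}, and only then defining the points $z_{p,\alpha,n}\in\Z_2^\kappa$ coordinatewise, which eliminates the recursion entirely. Separately, a strictly increasing continuous chain of length $\kappa\ge\cont$ cannot consist of countable subgroups, though that slip is repairable.
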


Let $\cont$ denote the cardinality of the continuum.
Applying Theorem \ref{main:theorem} to $P=\beta \N\setminus\N$ and $\kappa=2^\cont$, we get the following
\begin{corollary}
There exists a dense 
strongly
$(\beta \N\setminus\N)$-pseudocompact subgroup 
of $\Z_2^{2^\cont}$
without non-trivial convergent sequences.
\end{corollary}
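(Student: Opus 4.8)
The plan is to obtain the Corollary as a direct instance of Theorem~\ref{main:theorem}, so that the only work is to verify that the two hypotheses of that theorem are satisfied for the particular choices $P=\beta\N\setminus\N$ and $\kappa=2^\cont$.

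First I would confirm the cardinal arithmetic condition $\kappa^\omega=\kappa$. Since $\cont$ is infinite we have $\cont\cdot\omega=\cont$, and therefore
\[
\kappa^\omega=(2^\cont)^\omega=2^{\cont\cdot\omega}=2^\cont=\kappa,
\]
so $\kappa=2^\cont$ is indeed an infinite cardinal with $\kappa^\omega=\kappa$, as the theorem requires.

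Next I would check that $P=\beta\N\setminus\N$ satisfies $|P|\le\kappa$ and $P\ne\emptyset$. Non-emptiness is immediate (there exist free ultrafilters on $\N$). For the cardinality, recall Posp\'{\i}\v{s}il's classical theorem that $|\beta\N|=2^\cont$; since $\N$ is countable this also gives $|\beta\N\setminus\N|=2^\cont$. Hence $P$ is a non-empty subset of $\beta\N\setminus\N$ with $|P|=2^\cont=\kappa$, so the hypothesis $|P|\le\kappa$ holds (with equality).

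With both hypotheses verified, Theorem~\ref{main:theorem} produces a dense subgroup $G$ of $\Z_2^{2^\cont}$ that has no non-trivial convergent sequences and is strongly $P$-pseudocompact. Since here $P=\beta\N\setminus\N$, being strongly $P$-pseudocompact means, by definition, being strongly $p$-pseudocompact for every free ultrafilter $p$ on $\N$, i.e.\ strongly $(\beta\N\setminus\N)$-pseudocompact; this is exactly the assertion of the Corollary. There is no real obstacle here beyond recalling the value $|\beta\N|=2^\cont$ and the elementary identity $(2^\cont)^\omega=2^\cont$ --- all of the substance is contained in Theorem~\ref{main:theorem}.
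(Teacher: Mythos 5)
Your proposal is correct and is exactly the paper's argument: the corollary is obtained by applying Theorem~\ref{main:theorem} with $P=\beta\N\setminus\N$ and $\kappa=2^\cont$, the hypotheses being verified by $(2^\cont)^\omega=2^{\cont\cdot\omega}=2^\cont$ and $|\beta\N\setminus\N|=2^\cont$. Nothing further is needed.
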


Given a free ultrafilter $p$ on $\N$, we can 
apply Theorem \ref{main:theorem} to $P=\{p\}$ and $\kappa=\cont$ to get the following
\begin{corollary}
\label{cor:1.4}
For every free ultrafilter $p$ on $\N$, 
there exists a dense 
strongly
$p$-pseudocompact subgroup 
of $\Z_2^{\cont}$
without non-trivial convergent sequences.
\end{corollary}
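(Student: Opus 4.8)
The plan is to obtain Corollary \ref{cor:1.4} as a direct specialization of Theorem \ref{main:theorem}, so the whole ``proof'' reduces to checking that the hypotheses of that theorem are met for a well-chosen pair of parameters. Given a free ultrafilter $p$ on $\N$, I would set $\kappa=\cont$ and $P=\{p\}$. To apply Theorem \ref{main:theorem} I must verify the two standing assumptions: that $\kappa^\omega=\kappa$ and that $P$ is a non-empty subset of $\beta\N\setminus\N$ with $|P|\le\kappa$.

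The second assumption is immediate: $P=\{p\}$ is a non-empty subset of $\beta\N\setminus\N$ (identifying $\beta\N\setminus\N$ with the set of free ultrafilters on $\N$, as recalled in the excerpt), and $|P|=1\le\cont=\kappa$. For the first assumption I would simply record the standard cardinal computation
\[
\cont^{\omega}=(2^{\omega})^{\omega}=2^{\omega\cdot\omega}=2^{\omega}=\cont,
\]
so that indeed $\kappa^\omega=\kappa$.

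With both hypotheses in hand, Theorem \ref{main:theorem} produces a dense subgroup $G$ of $\Z_2^{\cont}$ that has no non-trivial convergent sequences and is strongly $P$-pseudocompact. The last step is to unravel the definition of strong $P$-pseudocompactness for a singleton: by the definition preceding the theorem, being strongly $\{p\}$-pseudocompact is exactly being strongly $p$-pseudocompact. Hence $G\le\Z_2^{\cont}$ is dense, strongly $p$-pseudocompact, and without non-trivial convergent sequences, which is precisely the assertion of Corollary \ref{cor:1.4}. There is no genuine obstacle here once Theorem \ref{main:theorem} is available; the only point requiring any attention is the (routine) verification $\cont^{\omega}=\cont$, together with the harmless bookkeeping that $P=\{p\}$ satisfies $|P|\le\cont$.
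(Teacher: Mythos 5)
Your proposal is correct and is exactly the paper's own derivation: the authors obtain Corollary \ref{cor:1.4} by applying Theorem \ref{main:theorem} with $P=\{p\}$ and $\kappa=\cont$, and the only checks needed are the routine facts $\cont^{\omega}=\cont$ and $|\{p\}|\le\cont$, which you verify.
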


Garcia-Ferreira and Ortiz-Castillo say that a space $X$ is {\em strongly pseudocompact\/} provided that, for every sequence 
$\{U_n:n\in\N\}$ of non-empty open subsets of $X$, one can choose a point $x_n\in U_n$ for all $n\in\N$ in such a way that the resulting sequence $\{x_n:n\in\N\}$ has a $p$-limit in $X$ for some free ultrafilter on $\N$ (depending on the sequence $\{U_n:n\in\N\}$ in question)
\cite{GF-OC}.
Dorantes-Aldama and Shakhmatov gave a list of equivalent
descriptions of this property in \cite[Theorem 2.1]{DA-S} and proposed an alternative name for it, calling a space $X$ with this property {\em selectively pseudocompact\/} \cite[Definition 2.2]{DA-S}. 
Clearly,
strongly
$p$-pseudocompact spaces are strongly pseudocompact (selectively pseudocompact). 
In particular, 
the groups
from Theorem \ref{main:theorem}
and both of its corollaries above are strongly pseudocompact (selectively pseudocompact). 

Dorantes-Aldama and the first listed author call a space $X$ {\em selectively sequentially pseudocompact\/} provided that, for every sequence 
$\{U_n:n\in\N\}$ of non-empty open subsets of $X$, one can choose a point $x_n\in U_n$ for all $n\in\N$ in such a way that the resulting sequence $\{x_n:n\in\N\}$ has a convergent subsequence
\cite[Definition 2.3]{DA-S}.
Selectively sequentially pseudocompact spaces are strongly pseudocompact (selectively pseudocompact), while the converse does not hold in general \cite{DA-S}.

Since infinite selectively sequentially pseudocompact spaces contain non-trivial convergent sequences \cite[Proposition 3.1]{DA-S}, from Corollary \ref{cor:1.4} and remarks above we obtain the following result:
 
\begin{corollary}
There exists a dense subgroup of $\Z_2^\cont$ which is selectively pseudocompact (strongly pseudocompact) but not selectively sequentially pseudocompact.
\end{corollary}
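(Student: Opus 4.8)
The final corollary is an immediate deduction from results already available, so my plan is to assemble the three ingredients already present in the excerpt rather than to prove anything substantial from scratch.

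First I would invoke Corollary~\ref{cor:1.4}: fixing any free ultrafilter $p$ on $\N$ (e.g.\ by a one-line appeal to the existence of free ultrafilters, or simply ``let $p$ be any free ultrafilter''), it produces a dense subgroup $G$ of $\Z_2^{\cont}$ that is strongly $p$-pseudocompact and has no non-trivial convergent sequences. Second, I would recall the observation made in the paragraph following Corollary~\ref{cor:1.4}: every strongly $p$-pseudocompact space is selectively pseudocompact (equivalently, strongly pseudocompact in the sense of Garcia-Ferreira and Ortiz-Castillo), so $G$ has this property. Third, I would use \cite[Proposition~3.1]{DA-S}, as quoted in the excerpt: every infinite selectively sequentially pseudocompact space contains a non-trivial convergent sequence. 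Since $G$ is infinite (being dense in the infinite space $\Z_2^{\cont}$, or simply because it is a dense subgroup of an infinite group) and has no non-trivial convergent sequences, it cannot be selectively sequentially pseudocompact. Combining these three facts, $G$ is the required dense subgroup of $\Z_2^{\cont}$ that is selectively pseudocompact (strongly pseudocompact) but not selectively sequentially pseudocompact.

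There is essentially no obstacle here: the only points deserving a word of care are (a) confirming that $G$ is genuinely infinite so that \cite[Proposition~3.1]{DA-S} applies — this is immediate since $\Z_2^{\cont}$ is infinite and $G$ is dense in it, hence $|G|\ge\cont$ — and (b) making sure the implication ``strongly $p$-pseudocompact $\Rightarrow$ selectively pseudocompact'' is cited correctly, which the excerpt has already flagged as clear. Thus the proof is a two-or-three sentence bookkeeping argument, and I would present it as such.
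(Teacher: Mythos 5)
Your proposal is correct and follows exactly the paper's own (implicit) argument: take the group from Corollary~\ref{cor:1.4}, note that strong $p$-pseudocompactness implies selective pseudocompactness, and rule out selective sequential pseudocompactness via \cite[Proposition 3.1]{DA-S} since the group is infinite and has no non-trivial convergent sequences. Nothing is missing.
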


This corollary provides a positive answer to 
\cite[Question 8.3 (i)]{DA-S}. Consistent examples of such topological 
groups were mentioned already in \cite[Example 5.7]{DA-S}.

We finish this section with the following question.
\begin{question}
Is there an abelian (or even Boolean) group without infinite compact subsets having one of the following properties (listed in an increasing strength):
\begin{itemize}
\item[(i)] selectively pseudocompact (strongly pseudocompact);
\item[(ii)] strongly $p$-pseudocompact for some free ultrafilter $p$;
\item[(iii)] strongly $(\beta\N\setminus\N)$-pseudocompact.
\end{itemize} 
\end{question}

\section{Coherent splitting maps}

Let $X$ be a non-empty set. The set $B(X)=[X]^{<\omega}$ of all finite subsets of $X$ becomes an abelian group with the symmetric difference $E+F=(E\setminus F)\cup(F\setminus E)$ as its group operation $+$ and the empty set as its zero element.
Each element $E$ of $B(X)$ has order $2$, as $E+E=0$.
A
group with this property is often called a {\em Boolean\/} group. Every Boolean group is abelian.

If one abuses notation by identifying an element $x\in X$ with the singleton $\{x\}\in B(X)$, then each element $E\in B(X)$ 
of the group $B(X)$ admits a unique decomposition
$E=\sum_{x\in E} x$, so the set $X$ can be naturally considered as
the set of generators of $B(X)$. 

Every map $f:X\to \Z_2$ has a unique extension $\tilde{f}:B(X)\to \Z_2$ to a homomorphism of $B(X)$ to $\Z_2$ defined 
by $\tilde{f}(E)=\sum_{x\in E} f(x)$ for $E\in B(X)$, where the sum is taken in the group $\Z_2$.
Since the variety $\mathcal{A}_2$ of all Boolean groups is generated by the single group $\Z_2$, the group $B(X)$ coincides with the free group in the variety $\mathcal{A}_2$ over a set $X$
\cite{DS}. Thus, $B(X)$ is the free Boolean group over $X$. 

\begin{definition}
\label{def:splitting}
Let $X$ be a non-empty set.
We shall say that a map $f:X\to \Z_2$  {\em splits\/} a subset $A$ of $B(X)$ provided that 
the set $\{a\in A: \tilde{f}(a)=i\}$ is infinite for each $i=0,1$. 
\end{definition}

Clearly, a subset 
split by some map 
must be infinite.
The converse also holds:

\begin{lemma}
\label{general:splitting:lemma}
Every infinite subset 
of $B(X)$
can be split by some 
map
$f:X\to \Z_2$.
\end{lemma}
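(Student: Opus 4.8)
The plan is to translate the statement into linear algebra over the two-element field $\F_2$. Identifying $\Z_2$ with $\F_2$ and each $x\in X$ with $\{x\}\in B(X)$, the group $B(X)$ is an $\F_2$-vector space with basis $X$, and the maps $f\colon X\to\Z_2$ correspond bijectively to the $\F_2$-linear functionals $\phi\colon B(X)\to\F_2$ via restriction to $X$; under this correspondence $\tilde f=\phi$, since both are linear and agree on the basis $X$. In these terms, ``$f$ splits $A$'' means exactly that the functional $\tilde f$ attains each of the values $0$ and $1$ on infinitely many members of $A$, so it is enough to exhibit one linear functional on $B(X)$ with that property.

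First I would pass to a convenient subfamily. As $A$ is infinite it contains a countably infinite subset of pairwise distinct elements, and shrinking $A$ only makes the task harder (if $f$ splits a subset of $A$ it splits $A$), so assume $A=\{a_n:n\in\N\}$ with the $a_n$ distinct. Let $V$ be the $\F_2$-span of $A$ inside $B(X)$. Since $A\subseteq V$ is infinite while every finite-dimensional $\F_2$-vector space is finite, $V$ is infinite-dimensional. A spanning set contains a basis, so fix an $\F_2$-basis $B\subseteq A$ of $V$; being a basis of an infinite-dimensional space, $B$ is infinite, and we choose pairwise distinct $b_k\in B$ for $k\in\N$.

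Next I would build the functional and read off $f$. Extend $B$ to an $\F_2$-basis $\widehat B$ of the whole space $B(X)$, define $g\colon\widehat B\to\F_2$ by $g(b_k)=k\bmod 2$ for $k\in\N$ and $g(b)=0$ for $b\in\widehat B\setminus\{b_k:k\in\N\}$, let $\phi\colon B(X)\to\F_2$ be the linear extension of $g$, and let $f$ be the restriction of $\phi$ to $X$. Then $\tilde f=\phi$, hence $\tilde f(b_k)=k\bmod 2$ for every $k$, so $\{a\in A:\tilde f(a)=0\}\supseteq\{b_k:k\in\N,\ k\text{ even}\}$ and $\{a\in A:\tilde f(a)=1\}\supseteq\{b_k:k\in\N,\ k\text{ odd}\}$ are both infinite. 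Thus $f$ splits $A$.

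The argument is short, and its one real point is the remark that infinitely many distinct vectors cannot all lie in a finite-dimensional $\F_2$-subspace; this forces $V$ to be infinite-dimensional and hence to contain an infinite linearly independent subfamily of $A$, on which a functional may be prescribed at will. Everything else --- a spanning set contains a basis, a basis extends to a basis of the ambient space, a functional defined on a basis extends linearly --- is routine and, in this infinite-dimensional setting, relies on Zorn's lemma. A purely combinatorial proof is also available: if some $x\in X$ belongs to infinitely many and misses infinitely many members of $A$, the characteristic function of $\{x\}$ splits $A$; otherwise one works with the points lying in cofinitely many members of $A$. The linear-algebraic route is cleaner and uniform, and is the one I would present.
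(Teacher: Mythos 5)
Your proof is correct. Note that the paper itself gives no argument for this lemma: it is dismissed as folklore, ``provable by a straightforward induction'' or derivable from a lemma of Tkachenko--Yaschenko. The induction the authors have in mind is presumably the standard one: from an infinite family of distinct finite sets one extracts a subsequence $a_{n_0},a_{n_1},\dots$ in which each $a_{n_{k+1}}$ contains a point outside $a_{n_0}\cup\dots\cup a_{n_k}$ (possible because a finite set has only finitely many subsets), and the value of $f$ at that fresh point is then used to force $\tilde f(a_{n_{k+1}})$ to alternate. Your linear-algebraic route repackages exactly this: the ``fresh point'' extraction is one concrete way of producing an infinite $\mathbb{F}_2$-independent subfamily of $A$, whereas you get such a subfamily abstractly from the observation that infinitely many distinct vectors cannot lie in a finite-dimensional $\mathbb{F}_2$-space, and then prescribe the functional on a basis. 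The identification $\tilde f=\phi$ is legitimate ($\tilde f$ is the unique homomorphic, i.e.\ $\mathbb{F}_2$-linear, extension of $f$, and a linear functional is determined by its restriction to the basis $X$), so the argument is complete; the price is an appeal to Zorn's lemma where the induction is choice-free and fully explicit, the gain is that no bookkeeping of supports is needed. Your closing combinatorial aside is too sketchy to stand on its own (it does not obviously cover, e.g., a family of pairwise disjoint sets), but since you present the linear-algebraic argument as the actual proof, nothing rests on it.
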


This lemma is part of folklore and can be proved by a straightforward induction. It can also be derived from 
\cite[Lemma 4.1]{TY}.

From now on we shall specify the structure of the set $X$.

\begin{definition}
\label{def:coherent}
Let $P$ be a non-empty subset of $\beta\N\setminus\N$
and let $K$ be a non-empty set.
Consider a set of the form $X = P \times K \times (\omega+1)$. 
We shall say that a map $f:X\to \Z_2$ is {\em coherent\/}
provided that
\begin{equation}
\label{eq:coherent}
\{n\in \omega: f(p,\alpha,n)=f(p,\alpha,\omega)\}\in p
\text{ for every }
p\in P
\text{ and each }
\alpha\in K.
\end{equation}
\end{definition}

Note that the map $f:X\to \Z_2$ is coherent if and only if 
$f(p,\alpha,\omega)$ is a $p$-limit of the sequence
$\{f(p,\alpha,n):n\in\N\}$ whenever $p\in P$ and $\alpha\in K$.

\begin{definition}
\label{def:X*}
For a set $X$ as in Definition \ref{def:coherent},
we define
$X^*=P\times K\times \{\omega\}$.
\end{definition}

\begin{lemma}
\label{no:top}
Every map $g:X\setminus X^*\to \Z_2$ admits a unique coherent 
extension  $f:X \to \Z_2$ over $X$.
\end{lemma}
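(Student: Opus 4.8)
The plan is to observe that extending $g$ amounts precisely to choosing the values $f(p,\alpha,\omega)$ for $(p,\alpha)\in P\times K$, and that coherence forces each of these values. Indeed $X\setminus X^*=P\times K\times\omega$, so any extension $f\colon X\to\Z_2$ of $g$ must coincide with $g$ on $P\times K\times\omega$ and is thus determined by the family $\{f(p,\alpha,\omega):(p,\alpha)\in P\times K\}$. First I would fix $p\in P$ and $\alpha\in K$ and consider the partition of $\omega$ into $A_0=\{n\in\omega: g(p,\alpha,n)=0\}$ and $A_1=\{n\in\omega: g(p,\alpha,n)=1\}$. Since $p$ is an ultrafilter on $\N=\omega$, exactly one of $A_0,A_1$ belongs to $p$; denote by $i(p,\alpha)\in\{0,1\}$ its index.

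Next I would define $f\colon X\to\Z_2$ to coincide with $g$ on $X\setminus X^*$ and to satisfy $f(p,\alpha,\omega)=i(p,\alpha)$ for every $(p,\alpha)\in P\times K$. Then $\{n\in\omega: f(p,\alpha,n)=f(p,\alpha,\omega)\}=A_{i(p,\alpha)}\in p$ for all $p\in P$ and $\alpha\in K$, so $f$ satisfies \eqref{eq:coherent} and is a coherent extension of $g$, which settles existence. For uniqueness, let $f'\colon X\to\Z_2$ be any coherent extension of $g$; then $f'$ agrees with $g$ on $X\setminus X^*$, and \eqref{eq:coherent} applied to $f'$ gives $A_{f'(p,\alpha,\omega)}\in p$, whence by the uniqueness of the index $i(p,\alpha)$ we get $f'(p,\alpha,\omega)=i(p,\alpha)=f(p,\alpha,\omega)$, so $f'=f$. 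I do not expect any genuine obstacle here: the entire content is the maximality (``primeness'') of the ultrafilter $p$ — for a two-piece partition of $\omega$ exactly one piece lies in $p$ — combined with the trivial bookkeeping that the three-coordinate product structure of $X$ lets each pair $(p,\alpha)$ be handled independently.
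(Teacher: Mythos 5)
Your proposal is correct and follows essentially the same argument as the paper: for each pair $(p,\alpha)$ the ultrafilter $p$ contains exactly one of the two sets $\{n:g(p,\alpha,n)=i\}$, and this forces the value $f(p,\alpha,\omega)$. You even spell out the uniqueness verification slightly more explicitly than the paper does, but the content is identical.
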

\begin{proof}
For fixed $p\in P$ and $\alpha \in K$,
we have 
$$
\{n\in\omega: g(p,\alpha,n )=0\}
\cup
\{n\in\omega: g(p,\alpha,n)=1\}
=\omega\in p.
$$
Since $p$ is an ultrafilter on $\omega$, there exists 
a unique $i_{\alpha,p}=0,1$ such that
\begin{equation}
\label{eq:3:g}
\{n\in\omega: g(p,\alpha,n)=i_{\alpha,p}\}\in p.
\end{equation}

Define $f(p,\alpha,\omega)=i_{\alpha,p}$ for every $p\in P$ and $\alpha \in K$. 
Finally, let $f(p,\alpha,n)=g(p,\alpha,n)$ for all $(p,\alpha,n)\in X\setminus X^*=P\times K\times \omega$.
It follows from this definition and 
\eqref{eq:3:g} that \eqref{eq:coherent} holds;
that is, $f$ is coherent by Definition \ref{def:coherent}.
\end{proof}

The main goal of this section is to prove the following
theorem strengthening Lemma \ref{general:splitting:lemma} by additionally requiring the splitting map to be coherent. 
\begin{theorem}
\label{coherent:split}
If $X$ is the set as in Definition \ref{def:coherent},
then every infinite subset 
of $B(X)$ can be split 
by some coherent 
map $f:X\to \Z_2$. 
\end{theorem}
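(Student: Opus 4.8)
The plan is to reduce Theorem~\ref{coherent:split} to the combination of Lemma~\ref{general:splitting:lemma} and Lemma~\ref{no:top}. Let $A\subseteq B(X)$ be infinite. The obvious first attempt is to apply Lemma~\ref{general:splitting:lemma} to obtain a splitting map $f\colon X\to\Z_2$ and then repair it on $X^*$ using Lemma~\ref{no:top}; but changing the values of $f$ on $X^*$ will in general destroy the splitting property, since coordinates from $P\times K\times\{\omega\}$ may occur in elements of $A$. So instead I would work with the restriction to $X\setminus X^*$ from the start. For each $a\in B(X)$, write $a=a'+a''$ where $a'=a\cap(X\setminus X^*)$ and $a''=a\cap X^*$; since $X^*$ is finite? — no, it need not be finite — so this decomposition is merely into two disjoint finite pieces, with $a''$ supported on $X^*$.

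The key dichotomy is this. Either (i) the set $A'=\{a':a\in A\}\subseteq B(X\setminus X^*)$ is infinite, or (ii) it is finite. In case (i), apply Lemma~\ref{general:splitting:lemma} to $A'$ inside $B(X\setminus X^*)$ to get $g\colon X\setminus X^*\to\Z_2$ splitting $A'$, then let $f\colon X\to\Z_2$ be its unique coherent extension provided by Lemma~\ref{no:top}. One still needs $f$ to split $A$ itself, not merely $A'$: since $A'$ is infinite, after passing to an infinite subset we may assume $a\mapsto a'$ is injective on $A$, so $\tilde f(a)=\tilde f(a')+\tilde f(a'')$ where the second term depends only on the finite tail $a''\in B(X^*)$. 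A counting/pigeonhole argument then shows that for at least one value $j\in\Z_2$ there are infinitely many $a\in A$ with $\tilde f(a'')=j$; but this does not immediately give both fibers of $\tilde f$ on $A$ infinite, so here I would instead be a little more careful: first shrink $A$ to an infinite subset on which $a''$ is \emph{constant} (possible if the values $a''$ take finitely many values — which happens precisely when finitely many elements of $X^*$ occur across $A$) or else reduce to case (ii). Handling this bookkeeping cleanly is the main obstacle, and I expect the cleanest route is to split on whether only finitely many elements of $X^*$ appear in elements of $A$.

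Concretely: let $S=\bigcup\{a\cap X^*:a\in A\}\subseteq X^*$. If $S$ is finite, then the map $a\mapsto a\cap X^*$ takes finitely many values, so some fiber $A_0=\{a\in A: a\cap X^*=F\}$ is infinite for a fixed $F\subseteq X^*$; now $\{a\setminus F: a\in A_0\}$ is an infinite subset of $B(X\setminus X^*)$, split it by $g$ via Lemma~\ref{general:splitting:lemma}, extend coherently to $f$ by Lemma~\ref{no:top}, and note $\tilde f(a)=\tilde f(a\setminus F)+\tilde f(F)$ for $a\in A_0$, so $\tilde f$ splits $A_0\subseteq A$ because adding the constant $\tilde f(F)$ merely permutes the two fibers. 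If instead $S$ is infinite, choose for each element of an infinite sequence in $S$ a witness $a\in A$ containing it; this yields an infinite $A_1\subseteq A$ together with a choice of points $x_a\in a\cap X^*$, all distinct. Then define $g$ on $X\setminus X^*$ arbitrarily — say identically $0$ — extend coherently to $f$, and observe that $\tilde f$ restricted to $A_1$ need not yet split it; but we are still free to perturb the values of $f$ on the infinitely many distinct points $x_a$. Setting $f(x_a)$ for alternating $a$ along the sequence to correct parities forces half of the $\tilde f(a)$ to be $0$ and half to be $1$. The remaining verification that such a perturbation keeps $f$ coherent is immediate, since coherence \eqref{eq:coherent} constrains $f$ only through the relation between $f(p,\alpha,n)$ and $f(p,\alpha,\omega)$, and we have freely re-chosen $f(p,\alpha,\omega)=f(x_a)$ only at finitely-per-slice or fresh coordinates; to be safe one applies Lemma~\ref{no:top} once more to the final restriction $f\!\restriction_{X\setminus X^*}$ to legitimize the extension. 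The crux throughout is the interaction between the symmetric-difference algebra and the fixed finite ``top'' coordinates $X^*$, and organizing the argument so that one never needs to control $\tilde f$ on $X^*$ beyond a single additive constant.
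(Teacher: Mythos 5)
Your reduction in the case where $S=X^*\cap(\bigcup A)$ is finite is correct and is exactly the paper's Lemma~\ref{2.7}: fix the infinite fiber $A_0=\{a\in A: a\cap X^*=F\}$, split $\{a\setminus F:a\in A_0\}$ inside $B(X\setminus X^*)$ by Lemma~\ref{general:splitting:lemma}, extend coherently by Lemma~\ref{no:top}, and absorb the constant $\tilde f(F)$. The problem is the case where $S$ is infinite. There your key step --- ``we are still free to perturb the values of $f$ on the infinitely many distinct points $x_a$'' --- is false. Coherence is not a constraint you can satisfy \emph{after} choosing the values on $X^*$: by Lemma~\ref{no:top} the coherent extension of a map $g:X\setminus X^*\to\Z_2$ is \emph{unique}, so $f(p,\alpha,\omega)$ is completely determined (it is the $p$-limit of $\{g(p,\alpha,n):n\in\omega\}$). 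If you set $g\equiv 0$ on $X\setminus X^*$, every value on $X^*$ is forced to be $0$, and your final ``legitimizing'' step --- reapplying Lemma~\ref{no:top} to $f\restriction_{X\setminus X^*}$ --- simply erases the perturbation. To change $f(x_a)$ you must change $g$ on a $p$-large subset of the fiber $\{(p,\alpha,n):n\in\omega\}$ below $x_a$; but those fiber points may occur in the supports of other elements of $A$, and a single $a$ may contain several points of $X^*$ together with points of other fibers, so these modifications interact and the naive alternating assignment does not go through.

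This interaction is precisely what the paper's second lemma (Lemma~\ref{2.9}) is designed to control, and it is the genuinely missing idea in your sketch: one first reduces to countable $P$ and $K$, then builds $f$ by \emph{finite coherent approximations} $f^q$ defined on sets $P^q\times K^q\times(\omega+1)$, i.e.\ on whole fibers at once. Given a finite approximation $r$ and a target value $i$, one uses the infinitude of $X^*\cap(\bigcup A)$ to find $a\in A$ containing a point $(p_0,k_0,\omega)$ whose fiber is entirely \emph{outside} the domain of $r$, and then defines $f^q$ on that fresh fiber (eventually constant equal to $l$, which keeps it coherent and forces $f^q(p_0,k_0,\omega)=l$) so that $\tilde{f^q}(a)=i$. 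The density of these conditions, together with a Rasiowa--Sikorski style argument (Lemma~\ref{countable:generic:filter}) over the countably many dense sets $C_p$, $E_k$, $D_{B,i}$, produces a single coherent $f$ splitting $A$. Your proposal contains no mechanism for reserving fresh fibers or for making the perturbations non-interacting, so as written the infinite case does not close.
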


The interpretation of this theorem in terms of free precompact Boolean groups shall be given in Section \ref{sec:applications}; see 
Theorem \ref{sequences:in:free:precompact:Boolean:groups}.

The rest of this section is devoted to the proof of this theorem.
It
shall be split into 
two
lemmas dealing with particular cases. 

\begin{lemma}
\label{2.7}
Let $X$ be the set as in Definition \ref{def:coherent}
and $X^*$ be as in Definition \ref{def:X*}.
If $A$ is an infinite subset of $B(X)$ such that
$X^*\cap(\bigcup A)$
is finite, then 
some
coherent 
map $f:X\to \Z_2$ 
splits $A$.
\end{lemma}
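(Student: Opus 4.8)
The plan is to reduce the statement to the coherence-free splitting lemma (Lemma~\ref{general:splitting:lemma}) applied to the set $X\setminus X^*$, using the finiteness hypothesis to neutralize the coherence requirement. First I would exploit that $F:=X^*\cap(\bigcup A)$ is finite. Since every $a\in A$ satisfies $a\subseteq\bigcup A$, we have $a\cap X^*=a\cap F\subseteq F$, so the assignment $a\mapsto a\cap X^*$ takes at most $2^{|F|}$ values on $A$. As $A$ is infinite, there is an infinite subfamily $A_0\subseteq A$ and a fixed $c\subseteq F$ with $a\cap X^*=c$ for every $a\in A_0$. It suffices to produce a coherent $f:X\to\Z_2$ that splits $A_0$, because then $\{a\in A:\tilde f(a)=i\}\supseteq\{a\in A_0:\tilde f(a)=i\}$ is infinite for each $i=0,1$.

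Next I would pass to $X\setminus X^*$. Put $A_0'=\{a\setminus X^*:a\in A_0\}\subseteq B(X\setminus X^*)$. Since $a\cap X^*=c$ is the same for all $a\in A_0$, the map $a\mapsto a\setminus X^*$ is injective on $A_0$, hence $A_0'$ is infinite. By Lemma~\ref{general:splitting:lemma}, applied with $X\setminus X^*$ in place of $X$, there is a map $g:X\setminus X^*\to\Z_2$ that splits $A_0'$; that is, $\{b\in A_0':\tilde g(b)=i\}$ is infinite for each $i=0,1$.

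Finally, let $f:X\to\Z_2$ be the unique coherent extension of $g$ provided by Lemma~\ref{no:top}. For $a\in A_0$, decomposing $a=(a\cap X^*)+(a\setminus X^*)=c+(a\setminus X^*)$ in $B(X)$ and using that $f$ extends $g$, we get
\[
\tilde f(a)=\tilde f(c)+\tilde f(a\setminus X^*)=\tilde f(c)+\tilde g(a\setminus X^*).
\]
Because $a\mapsto a\setminus X^*$ is a bijection of $A_0$ onto $A_0'$ and $t\mapsto\tilde f(c)+t$ is a bijection of $\Z_2$, each of the values $0$ and $1$ is attained by $\tilde f$ on infinitely many $a\in A_0$. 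Hence $f$ splits $A_0$, and therefore $A$, while $f$ is coherent by construction.

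The only real subtlety I anticipate is the apparent entanglement of the coherence and splitting requirements: coherence, via Lemma~\ref{no:top}, pins the values of $f$ on $X^*$ to its values on $X\setminus X^*$. The hypothesis that $X^*\cap(\bigcup A)$ is finite is exactly what decouples them — after restricting to $A_0$, the entire $X^*$-contribution to each $\tilde f(a)$ collapses to the single constant $\tilde f(c)$, so coherence becomes irrelevant to whether $f$ splits $A_0$, and all genuine combinatorial content is handled by Lemma~\ref{general:splitting:lemma}. (This also explains why the complementary case, in which $X^*\cap(\bigcup A)$ is infinite, should require a separate and more delicate argument.)
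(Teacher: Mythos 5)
Your proposal is correct and follows essentially the same route as the paper's proof: pigeonhole on the (finitely many) possible traces $a\cap X^*$ to get an infinite subfamily with constant trace, split the truncated family in $B(X\setminus X^*)$ via Lemma~\ref{general:splitting:lemma}, extend coherently by Lemma~\ref{no:top}, and transfer the splitting back by adding the constant $\tilde f(c)$. The only (welcome) difference is that you make explicit the injectivity of $a\mapsto a\setminus X^*$ on the subfamily, which the paper leaves implicit.
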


\begin{proof}
Since $J= X^*\cap(\bigcup A)$ is finite and $A$ is infinite, there exists $I\in[J]^{<\omega}$ such that the set 
\begin{equation}
\label{eq:A'}
A'=\{a\in A: a\cap X^*=I\}
\end{equation}
is infinite. 
Then 
\begin{equation}
\label{eq:B}
B=\{a\setminus X^*: a\in A'\}=\{a\setminus I: a\in A'\}
\end{equation}
is an infinite subset of $B(X\setminus X^*)$.
By Lemma \ref{general:splitting:lemma}, there exists a
map $g:X\setminus X^*\to\Z_2$
which splits $B$.
Let $f:X\to\Z_2$ be the unique coherent map extending $g$ given by Lemma \ref{no:top}.
Since $g$ splits $B$ and $f$ extends $g$, the map $f$ splits $B$ as well.
It follows from this, \eqref{eq:B} and Definition \ref{def:splitting} that
\begin{equation}
\label{eq:5}
\{a\in A': \tilde{f}(a\setminus I)=i\}
\text{ is infinite for every }
i=0,1.
\end{equation}

Define $j=\tilde{f}(I)$. Clearly, $j\in\{0,1\}$.
It follows from \eqref{eq:A'} and \eqref{eq:B} that
$a = (a\setminus I)\cup I$
for every $a\in A'$, 
so $a=(a\setminus I)+I$ holds in $B(X)$, and therefore,
\begin{equation}
\label{eq:6}
\tilde{f}(a)=\tilde{f}(a\setminus I)+\tilde{f}(I)
=
\tilde{f}(a\setminus I)+j
\ 
\text{ for }
a\in A',
\end{equation}
as $\tilde{f}$ is a homomorphism.
Combining \eqref{eq:5} and \eqref{eq:6}, we conclude that
$\{a\in A': \tilde{f}(a)=i\}$ is infinite for every $i=0,1$.
Since $A'\subseteq A$, the same conclusion holds when $A'$ is replaced by $A$.
According to Definition \ref{def:splitting}, this means that $f$ splits $A$.
\end{proof}

Let $(\Q,\le)$ be a poset.
Recall that a set $D\subseteq \Q$ is said to be
   {\em dense in $(\Q,\le)$\/} provided that for every $r\in \Q$ there exists $q\in D$ such that $q\le r$.

We shall need the following folklore lemma.

\begin{lemma}
\label{countable:generic:filter}
If $\mathscr{D}$ is an at most countable family of dense subsets of a non-empty poset 
$(\Q,\le)$, then there exists an at most countable subset $\F$ of $\Q$
such that $(\F,\le)$ is a linearly ordered set and 
$\F\cap D\not=\emptyset$ for every $D\in\mathscr{D}$.
\end{lemma}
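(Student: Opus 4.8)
The plan is to build, by recursion, a single strictly (well, weakly) decreasing $\omega$-chain in $(\Q,\le)$ that meets every member of $\mathscr{D}$; such a chain is automatically linearly ordered and at most countable, so it can serve as the desired $\F$.

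First I would dispose of the degenerate case: if $\mathscr{D}=\emptyset$, pick any $q\in\Q$ — possible since $\Q$ is non-empty — and set $\F=\{q\}$. Otherwise, fix a surjection $n\mapsto D_n$ from $\omega$ onto $\mathscr{D}$ (this is possible even when $\mathscr{D}$ is finite, simply by repeating entries), so that $\mathscr{D}=\{D_n:n\in\omega\}$. Now I would define $q_n\in\Q$ by recursion on $n$. To get $q_0$, take an arbitrary $r\in\Q$ and apply density of $D_0$ to $r$, obtaining $q_0\in D_0$ with $q_0\le r$. Given $q_n$, apply density of $D_{n+1}$ to $q_n$ to obtain $q_{n+1}\in D_{n+1}$ with $q_{n+1}\le q_n$. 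Finally let $\F=\{q_n:n\in\omega\}$.

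It then remains to check the three required properties. By construction $q_n\in D_n$, so $\F\cap D\neq\emptyset$ for every $D\in\mathscr{D}$; and obviously $|\F|\le\omega$. The sequence is non-increasing, $q_0\ge q_1\ge q_2\ge\cdots$, so whenever $i\le j$ we have $q_j\le q_i$ by transitivity; hence any two elements of $\F$ are $\le$-comparable and $(\F,\le)$ is linearly ordered. There is no real obstacle here — the argument is the standard Rasiowa–Sikorski-style construction — the only points needing a word of care are that $\Q\neq\emptyset$ is used to launch the recursion and that a countable enumeration is harmless even when $\mathscr{D}$ is finite.
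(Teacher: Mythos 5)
Your proof is correct and is essentially the same Rasiowa--Sikorski-style recursion used in the paper: enumerate $\mathscr{D}$ as $\{D_n\}$, start from an arbitrary element of $\Q$ (using $\Q\neq\emptyset$), and repeatedly descend into each $D_n$ by density, taking $\F$ to be the resulting decreasing chain. The only difference is cosmetic indexing (the paper's chain starts with an arbitrary $q_0$ not required to lie in any $D_n$, while yours absorbs $D_0$ into the first step), which does not affect the argument.
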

\begin{proof}
Since the family $\mathscr{D}$ is at most countable, we can fix an enumeration $\mathscr{D}=\{D_n:n\in\omega\setminus\{0\}\}$ of elements of $\mathscr{D}$. Since $\Q\not=\emptyset$, there exists $q_0\in\Q$. By induction  on $n\in\omega\setminus\{0\}$, we can choose 
$q_n\in D_n$ such that $q_n\le q_{n-1}$; this is possible because 
$D_n$ is dense in $(\Q,\le)$. Now $\F=\{q_n:n\in\omega\setminus\{0\}\}$ is the desired subset of $\Q$.
\end{proof}

\begin{lemma}
\label{2.9}
Suppose that $P$ and $K$ are at most countable non-empty sets such that $P\subseteq \beta\N\setminus\N$,
$X$ is the set as in Definition \ref{def:coherent}
and $X^*$ is as in Definition \ref{def:X*}.
Assume also that 
$A$ is a subset of $B(X)$ such that
the set
$X^*\cap(\bigcup A)$
is infinite.
Then 
some
coherent 
map $f:X\to \Z_2$
splits $A$.
\end{lemma}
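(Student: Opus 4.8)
The plan is to obtain $f$ as the unique coherent extension (supplied by Lemma~\ref{no:top}) of a sufficiently generic map $g\colon X\setminus X^*\to\Z_2$, with the genericity provided by Lemma~\ref{countable:generic:filter}; this is available because $X\setminus X^*=P\times K\times\omega$ is at most countable. As the underlying poset $(\Q,\le)$ I would take the following. A condition is a pair $q=(s_q,h_q)$ where $s_q$ is a finite partial function $X\setminus X^*\to\Z_2$ and $h_q$ is a finite partial function $P\times K\to\Z_2$, and $q'\le q$ means $s_{q'}\supseteq s_q$, $h_{q'}\supseteq h_q$, and $s_{q'}(p,\alpha,n)=h_q(p,\alpha)$ for every $(p,\alpha)\in\mathrm{dom}(h_q)$ and every $(p,\alpha,n)\in\mathrm{dom}(s_{q'})\setminus\mathrm{dom}(s_q)$. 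The role of $h$ is this: once $(p,\alpha)$ is put into $\mathrm{dom}(h)$ with value $c$, every value of $g$ on the fibre $\{(p,\alpha)\}\times\omega$ declared afterwards is forced to be $c$, so $g$ agrees with $c$ on this fibre off a finite set and hence $f(p,\alpha,\omega)=c$ (a free ultrafilter contains every cofinite subset of $\omega$), while the finitely many values of $g$ already recorded in $s_q$ on that fibre are permitted to disagree with $c$. One checks routinely that $(\Q,\le)$ is a non-empty poset.

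For a chain $\F\subseteq\Q$ put $g=\bigcup_{q\in\F}s_q$ and $h=\bigcup_{q\in\F}h_q$. I would feed Lemma~\ref{countable:generic:filter} a countable family $\mathscr{D}$ of dense subsets of $\Q$ of two kinds. First, for each $(p,\alpha,n)\in X\setminus X^*$ the set $\{q:(p,\alpha,n)\in\mathrm{dom}(s_q)\}$ is dense; including all of these (countably many, since $X\setminus X^*$ is countable) makes $g$ total, so Lemma~\ref{no:top} yields the coherent map $f\colon X\to\Z_2$ extending $g$, and by the remark above $f(p,\alpha,\omega)=h(p,\alpha)$ whenever $(p,\alpha)\in\mathrm{dom}(h)$. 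Say that \emph{$\tilde{f}(a)$ is decided by $q$} when $f(y)$ is already fixed by $q$ for every $y\in a$, i.e.\ $y\in\mathrm{dom}(s_q)$ for $y\in a\setminus X^*$ and $(p,\alpha)\in\mathrm{dom}(h_q)$ whenever $(p,\alpha,\omega)\in a$; in that case $\tilde{f}(a)$ is computable from $q$. Second, for $i\in\{0,1\}$ and $m\in\omega$ let $E_{i,m}$ be the set of conditions $q$ for which there are $m$ pairwise distinct $a\in A$ with $\tilde{f}(a)$ decided by $q$ to equal $i$. Granting the density of every $E_{i,m}$, a chain $\F$ meeting every member of $\mathscr{D}$ (including all the $E_{i,m}$) yields an $f$ with $\{a\in A:\tilde{f}(a)=i\}$ infinite for $i=0$ and for $i=1$; that is, the coherent map $f$ splits $A$, which is the conclusion.

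The crux --- and the step I expect to be the main obstacle --- is the density of $E_{i,m}$, and this is where the hypothesis that $X^*\cap\bigcup A$ is infinite enters. Given $q$, I would adjoin witnesses one at a time. Suppose $q'\le q$ has already secured pairwise distinct $a_1,\dots,a_r\in A$ with their $\tilde{f}$-values decided (hence permanently fixed) by $q'$. Since distinct points of $X^*$ correspond to distinct pairs of $P\times K$, and $X^*\cap\bigcup A$ is infinite while only finitely many of its points are forbidden --- those whose pair lies in the finite set $\mathrm{dom}(h_{q'})$, and those lying in the finite set $a_1\cup\dots\cup a_r$ --- we may choose $x=(p,\alpha,\omega)\in X^*\cap\bigcup A$ with $(p,\alpha)\notin\mathrm{dom}(h_{q'})$ and $x\notin a_1\cup\dots\cup a_r$, and then $a\in A$ with $x\in a$; then $a\notin\{a_1,\dots,a_r\}$. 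Let $D_a\subseteq P\times K$ be the finite set of pairs occurring as the first two coordinates of points of $a$, so $(p,\alpha)\in D_a$. I would form $q''$ by setting $\mathrm{dom}(h_{q''})=\mathrm{dom}(h_{q'})\cup D_a$, where $h_{q''}$ extends $h_{q'}$, takes value $0$ on the new pairs other than $(p,\alpha)$, and takes on $(p,\alpha)$ a value $c\in\{0,1\}$ that will be fixed only at the end; and $\mathrm{dom}(s_{q''})=\mathrm{dom}(s_{q'})\cup(a\setminus X^*)$, where $s_{q''}$ extends $s_{q'}$ and, at a point $(p',\alpha',k)\in(a\setminus X^*)\setminus\mathrm{dom}(s_{q'})$, equals $h_{q'}(p',\alpha')$ if $(p',\alpha')\in\mathrm{dom}(h_{q'})$ and equals $0$ otherwise. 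Then $q''\le q'$ and $\tilde{f}(a)$ is decided by $q''$; moreover the only point of $a$ whose $f$-value under $q''$ is the still-unchosen $c$ is $x$ itself, since every non-limit point of $a$ on the fibre of $(p,\alpha)$ receives its pre-existing $s_{q'}$-value or $0$. Hence $\tilde{f}(a)=u+c$, where $u\in\Z_2$ is the sum of the already-fixed values $f(y)$ over $y\in a\setminus\{x\}$, and choosing $c=u+i$ gives $\tilde{f}(a)=i$: one more witness is secured. The points to verify with care are that the assignments just made keep $q''\le q'$ (the ordering clause holds precisely because we forced the value $h_{q'}(p',\alpha')$ at the relevant points), that the earlier witnesses $a_1,\dots,a_r$ remain decided with unchanged values under $q''$, and that recording at the finitely many non-limit points of $a$ over $(p,\alpha)$ a value disagreeing with the eventual value $c$ is legitimate --- which it is because $(p,\alpha)$ was previously undecided; this last is the single degree of freedom that makes hitting the prescribed parity possible. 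Iterating $m$ times lands in $E_{i,m}$ below $q$, so $E_{i,m}$ is dense.
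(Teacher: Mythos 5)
Your proof is correct and takes essentially the same route as the paper's: both apply Lemma~\ref{countable:generic:filter} to a poset of finite approximations to a coherent map, with dense sets forcing totality of the domain and the splitting, and with the same key move of adjoining a set $a\in A$ containing a fresh point of $X^*\cap\bigl(\bigcup A\bigr)$ whose previously undecided fibre supplies the one free bit that fixes the parity of $\tilde{f}(a)$. The only difference is bookkeeping: the paper's conditions are coherent maps defined on whole finite rectangles $P^q\times K^q\times(\omega+1)$, whereas yours are genuinely finite partial functions on $X\setminus X^*$ together with the commitment record $h_q$, coherence being recovered at the end via Lemma~\ref{no:top}.
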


\begin{proof}
We denote by $\Q$ the set of all structures
$q=\structure{P^q,K^q, f^q}$, where
$P^q\in [P]^{<\omega}$, $K^q\in [K]^{<\omega}$,
and $f^q: P^q\times K^q\times(\omega+1)\to \Z_2$ is a coherent map. 
For $q=\structure{P^q,K^q, f^q}, r=\structure{P^r,K^r, f^r}\in \Q$, we let $q\le r$ provided that
$P^r\subseteq  P^q$, $K^r\subseteq K^q$ and $f^q$ extends $f^r$.
One easily sees that $(\Q,\le)$ is a  poset.
Clearly, $\structure{\emptyset,\emptyset,\emptyset}\in\Q$, so $\Q\not=\emptyset$.

\begin{claim}
\label{claim:0}
(i) For every $p\in P$, the set $C_p=\{q\in\Q: p\in P^q\}$ is dense in $(\Q,\le)$.

(ii) For every $k\in K$, the set $E_k=\{q\in\Q: k\in K^q\}$ is dense in $(\Q,\le)$.
\end{claim}
\begin{proof}
(i) Suppose that $r\in\Q$ and $p\in P\setminus P^r$.
Define $P^q=P^r\cup \{p\}$, $K^q=K^r$ and note that the extension
$f^q: P^q\times K^q\times(\omega+1)$ of $f^r$ obtained by letting $f^q(p,k,n)=0$ for all $k\in K^q=K^r$ and $n\in\omega+1$, is coherent, so $q=\structure{P^q, K^q, f^q}\in \Q$. Clearly,
$q\in C_p$ and $q\le r$.

(ii) Suppose that $r\in\Q$ and $k\in K\setminus K^r$.
Define $P^q=P^r$, $K^q=K^r\cup \{k\}$ and note that the extension
$f^q: P^q\times K^q\times(\omega+1)$ of $f^r$ obtained by letting $f^q(p,k,n)=0$ for all $p\in P^q=P^r$ and $n\in\omega+1$, is coherent, so $q=\structure{P^q, K^q, f^q}\in \Q$. Clearly,
$q\in E_k$ and $q\le r$.
\end{proof}

\begin{claim}
\label{claim:2}
For every  $B\in [A]^{<\omega}$ and each $i=0,1$, the set 
\begin{equation}
\label{eq:D{B,i}}
D_{B,i}=\{q\in \Q: \exists a\in A\setminus B\  
(a\subseteq P^q\times K^q\times(\omega+1) \wedge 
\tilde{f^q}(a)=i)\}
\end{equation}
 is dense in $(\Q,\le)$.
\end{claim}
\begin{proof}
Let $r\in \Q$, $B\in[A]^{<\omega}$
 and $i\in\{0,1\}$ be arbitrary.
 We need to find $q\in\Q$ such that $q\le r$,
$a \subseteq P^q\times K^q$ and $\tilde{f^q}(a)=i$.

Since $B$ is finite, the intersection $X^*\cap (\bigcup B)$ is also finite. 
Furthermore,
since both $P^r$ and $K^r$ are finite sets, so is the set 
$P^r\times K^r\times\{\omega\}$.
Therefore, 
\begin{equation}
\label{eq:F}
F=(X^*\cap (\bigcup B))\cup(P^r\times K^r\times\{\omega\})
\end{equation}
  is a finite subset of $X^*$.
By our hypothesis, $X^*\cap(\bigcup A)$ is infinite, so 
there exists $a\in A$ such that $(a\cap X^*)\setminus F\not=\emptyset$.
Fix 
$p_0 \in P$, $k_0 \in K$ and $a \in A$ such that 
$(p_0,k_0,\omega) \in a \setminus F$.
It follows from this and \eqref{eq:F} that $a\in A\setminus B$.

Since $a$ is a finite subset of $X=P\times K\times (\omega+1)$,
there exist finite sets $P^q\subseteq P$ and $K^q\subseteq K$
such that $a\subseteq P^q\times K^q\times (\omega+1)$.
Without loss of generality, we may also assume that
$P^r\subseteq P^q$ and $K^r\subseteq K^q$. 

Let $a'=a\cap (P^r\times K^r\times (\omega+1))$.
Then $j=\tilde{f^r}(a')\in\Z_2$ is well-defined.
There exists a unique $l\in\Z_2$ such that $j+l=i$.
Note that $
(p_0,k_0) \in (P^q\times K^q)
\setminus (P^r\times K^r),$
so we can define a function $f^q: P^q\times K^q\times(\omega+1)$ by 
\begin{equation}
\label{eq:f^q}
f^q(p,k,n)=
\left\{\begin{array}{ll}
f^r(p,k,n) & \mbox{if }  (p,k,n)\in P^r\times K^r\times(\omega+1)\\
l & \mbox{if } (p,k)=(p_0,k_0) \mbox{ and either } n=\omega\mbox{ or } (p,k,n)\not\in a\\
0 & \mbox{otherwise}
\end{array}
\right.
\end{equation}
for all $(p,k,n)\in P^q\times K^q\times(\omega+1)$.

We claim that $q=\structure{P^q,K^q, f^q}\in\Q$.
Since $P^q\in [P]^{<\omega}$ and  $K^q\in [K]^{<\omega}$ by our construction,
we only need to check that the map $f^q: P^q\times K^q\times(\omega+1)\to \Z_2$ is coherent. 
Let $p\in P^q$ and $k\in K^q$ be arbitrary.
If $(p,k)\in P^r\times K^r$, then 
$$
\{n\in \omega: f^q(p,k,n)=f^q(p,k,\omega)\}=\{n\in \omega: f^r(p,k,n)=f^r(p,k,\omega)\}\in p
$$ by \eqref{eq:f^q} and
coherency of $f^r$.
Suppose now that $(p,k)\in (P^q\times K^q)\setminus (P^r\times K^r)$.
If $(p,k)\not=(p_0,k_0)$, then
$f^q(p,k,n)=0$ for all $n\in\omega+1$ by \eqref{eq:f^q}, so
$\{n\in\omega: f^q(p,k,n)=f^q(p,k,\omega)\}=\omega\in p$.
Finally, if $(p,k)=(p_0,k_0)$, then the second line of 
\eqref{eq:f^q} implies that $f^q(p,k,\omega)=l$
and $f^q(p,k,n)=l$ for all but finitely many $n\in\omega$, as the set $a$ is finite. Therefore, $\{n\in \omega: f^q(p,k,n)=f^q(p,k,\omega)\}$ is a cofinite subset of $\omega$, so it belongs to $p$ as $p$ is a free ultrafilter on $\omega$.
This finishes the check of the inclusion $q\in \Q$. Clearly,
$q\le r$.

Let us show that $q\in D_{B,i}$.
The only condition that remains to be checked is the equality $\tilde{f^q}(a)=i$.

First, let us show that $\tilde{f^q}(a\setminus a')=l$.
Indeed, 
since
$a'=a\cap (P^r\times K^r\times (\omega+1))$,
we have 
$a\setminus a'\subseteq (P^q\times K^q)\setminus (P^r\times K^r)$,
so \eqref{eq:f^q} implies that $f^q(p_0,k_0,\omega)=l$
and 
$f^q(p,k,n)=0$ for all $(p,k,n)\in a\setminus a'$ such that 
$(p,k)\not=(p_0,k_0)$.
(Recall that $(p_0,k_0,\omega)\in a\setminus a'$ by our choice.)
Since $\tilde{f^q}$ is a homomorphism, this implies
$$
\tilde{f^q}(a\setminus a')=\sum_{(p,k,n)\in a\setminus a'}\tilde{f^q}(\{p,k,n\})=\sum_{(p,k,n)\in a\setminus a'} f^q(p,k,n)=
f^q(p_0,k_0,\omega)=l.
$$

Second,
note that $a=(a\setminus a')\cup a'$, so $a=(a\setminus a')+a'$.
Since $\tilde{f^q}$ is a homomorphism,
$
\tilde{f^q}(a)=\tilde{f^q}(a\setminus a')+\tilde{f^q}(a')=
l+j=i.
$
\end{proof}

By Claims \ref{claim:0} and \ref{claim:2}, the family
$$\mathscr{D} = 
\{C_p: p\in P\}
\cup
\{E_k:k\in K\}
\cup
\{D_{B,i} : B \in [A]^{< \omega}, i \in \{0,1\} \}
$$
consists of dense subsets of $(\Q,\le)$.
Since $P$, $K$ and $A$ are at most countable, so is $\mathscr{D}$.
By Lemma \ref{countable:generic:filter}, there exists a set $\F=\{q_n:n\in\N\}\subseteq\Q$
such that $q_0\ge q_1\ge \dots\ge q_n\ge q_{n+1}\ge\dots$
and 
$\F\cap D\not=\emptyset$ for every $D\in\mathscr{D}$.
We claim that  $f=\bigcup\{f^{q_n}:n\in\N\}$ is the coherent map from $X$ to $\Z_2$ splitting $A$.
Since $\F$ intersects each $C_p$ and every $E_k$, the domain of $f$ coincides with $X=P\times K\times(\omega+1)$.
Since each $f^{q_n}$ is coherent and $f$ extends all $f^{q_n}$, 
it easily follows that $f$ is coherent as well.

Suppose 
that $f$ does not split $A$.
Then the set $B=\{a\in A: \tilde{f}(a)=i\}$ must be finite for some $i=0,1$, so
$B\in[A]^{<\omega}$ and thus,
$D_{B,i}\in \mathscr{D}$. Therefore, $q_n\in D_{B,i}$ for some $n\in\N$.
Applying \eqref{eq:D{B,i}}, we can find $a\in A\setminus B$
such that $a\subseteq P^{q_n}\times K^{q_n}\times(\omega+1)$
and 
$\tilde{f^{q_n}}(a)=i$. Since $f^{q_n}\subseteq f$, this implies
$\tilde{f}(a)=\tilde{f^{q_n}}(a)=i$.
Therefore, $a\in B$ by the definition of the set $B$, in contradiction with $a\in A\setminus B$.
\end{proof}

\medskip
\noindent
{\bf Proof of Theorem \ref{coherent:split}.}
Let $A$ be an infinite subset of $X$. Choose a countably infinite subset $A'$ of $A$. Since $A'\subseteq B(X)=[X]^{<\omega}$, there exists at most countable sets $P'\subseteq P$ and $K'\subseteq K$ such that $A\subseteq B(X')$, where $X'=P'\times K'\times (\omega+1)$.
Combining Lemmas \ref{2.7}
and
\ref{2.9}, we can find a coherent map
$f':X'\to\Z_2$ splitting $A'$.
Let $f:X\to \Z_2$ be the extension of $f'$ over $X$ obtained by 
letting $f$ to take $0$ everywhere on $X\setminus X'$.
Clearly, $f$ is a coherent map which splits $A'$.
Since $A'\subseteq A$, $f$ splits $A$ as well.
\qed

\section{Proof of Theorem \ref{main:theorem}}

Fix a cardinal
$\kappa$
such that $\kappa^\omega=\kappa$,
and let $P$ be a non-empty subset of $\beta\N\setminus\N$ satisfying
$|P|\le\kappa$.
Define
$$
X=P\times\kappa\times(\omega+1)
\ 
\text{ and } 
\ 
X^*=P\times\kappa\times\{\omega\}.
$$

Note that
$|X|=\kappa$ by our assumption on $\kappa$ and $P$, 
so
$|B(X)|=|X^{<\omega}|=|X|=\kappa$
and
$|[B(X)]^\omega|=\kappa^\omega=\kappa$,
where $[B(X)]^\omega$ denotes the family of all countable subsets of $B(X)$.
Therefore,
we can fix an enumeration
$$
[B(X)]^\omega=\{A_\beta:\beta\in\kappa\}
$$
such that for every $A\in [B(X)]^\omega$ the set
$\{\beta\in\kappa: A_\beta=A\}$ is cofinal in $\kappa$.
For every $\beta\in\kappa$, use Theorem \ref{coherent:split} to  fix a coherent 
map $f_\beta:X\to\Z_2$
splitting $A_\beta$.

Since
$|[\kappa]^\omega|=\kappa$
by our assumption on $\kappa$, 
and $|(\Z_2^I)^\omega|=\cont\le\kappa$ for every $I\in[\kappa]^\omega$, 
we can fix an enumeration $[\kappa]^\omega=\{I_\alpha:\alpha<\kappa\}$
and a sequence $\{y_{p,\alpha,n}:n\in\omega\}\subseteq \Z_2^{I_\alpha}$ for every $(p,\alpha)\in P\times\kappa$
such that whenever
$I\in [\kappa]^\omega$, $p\in P$
and  $\{y_n:n\in\omega\}\subseteq\Z_2^I$, there exists $\alpha\in\kappa$ with
$I_\alpha=I$ and $y_{p,\alpha,n}=y_n$ for all $n\in\omega$.

Let $\alpha\in\kappa$ be arbitrary. For every $p\in P$, 
the sequence $\{y_{p,\alpha,n}:n\in\omega\}$ of points of the compact space $\Z_2^{I_\alpha}$ has a $p$-limit $y_{p,\alpha,\omega}\in \Z_2^{I_\alpha}$.

For each 
$(p,\alpha,n)\in X$,
define
$z_{p,\alpha,n}\in \Z_2^\kappa$ by
\begin{equation}
\label{eq:z}
z_{p,\alpha,n}(\beta)=
\left\{\begin{array}{ll}
y_{p,\alpha,n}(\beta) & \mbox{if } \beta\in I_\alpha \\
f_\beta(p,\alpha,n) & \mbox{if } \beta\in \kappa\setminus I_\alpha
\end{array}
\right.
\hskip30pt
\text{for every }
\beta\in\kappa.
\end{equation}
Finally, let 
\begin{equation}
Z=\{z_{p,\alpha,n}: (p,\alpha,n)\in X\}.
\end{equation}

\begin{claim}
\label{claim:1}
$Z$ is a dense, strongly
$P$-pseudocompact
subset of $\Z_2^\kappa$.
\end{claim}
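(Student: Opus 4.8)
The plan is to establish the two assertions---density and strong $P$-pseudocompactness---separately; the former is routine, while the latter carries essentially all of the content. Throughout it is convenient to extend the defining formula \eqref{eq:z} to $n=\omega$: for $(p,\alpha)\in P\times\kappa$ set $z_{p,\alpha,\omega}(\beta)=y_{p,\alpha,\omega}(\beta)$ if $\beta\in I_\alpha$ and $z_{p,\alpha,\omega}(\beta)=f_\beta(p,\alpha,\omega)$ if $\beta\in\kappa\setminus I_\alpha$. Since $(p,\alpha,\omega)\in X$, this point lies in $Z$, and it is the element that will serve as the required $p$-limit below.

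For density, I would start from an arbitrary non-empty basic open subset of $\Z_2^\kappa$, coded by a finite set $F\subseteq\kappa$ and a map $s\colon F\to\Z_2$. Fix any $p\in P$ (possible since $P\neq\emptyset$) and any $I\in[\kappa]^\omega$ with $F\subseteq I$, and let $\{y_n:n\in\omega\}\subseteq\Z_2^I$ be the constant sequence with $y_n(\beta)=s(\beta)$ for $\beta\in F$ and $y_n(\beta)=0$ for $\beta\in I\setminus F$. By the choice of the enumeration $[\kappa]^\omega=\{I_\alpha:\alpha<\kappa\}$ and of the sequences $\{y_{p,\alpha,n}:n\in\omega\}$, there is $\alpha<\kappa$ with $I_\alpha=I$ and $y_{p,\alpha,n}=y_n$ for all $n$. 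Then \eqref{eq:z} gives $z_{p,\alpha,0}(\beta)=y_{p,\alpha,0}(\beta)=s(\beta)$ for every $\beta\in F\subseteq I_\alpha$, so $z_{p,\alpha,0}\in Z$ witnesses that $Z$ meets the chosen basic open set.

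For strong $P$-pseudocompactness, fix $p\in P$ and a sequence $\{U_n:n\in\N\}$ of non-empty open subsets of $Z$. For each $n$ I would pick $w_n\in U_n$ and a basic open box $B_n$ of $\Z_2^\kappa$ with $w_n\in B_n$ and $B_n\cap Z\subseteq U_n$; writing $F_n$ for the (finite) support of $B_n$ and $s_n$ for the restriction of $w_n$ to $F_n$, one gets that every $z\in Z$ agreeing with $s_n$ on $F_n$ belongs to $U_n$. Put $I=\bigcup_{n\in\N}F_n$, an at most countable subset of $\kappa$ which, if need be, I enlarge to an element of $[\kappa]^\omega$, and define $y_n\in\Z_2^I$ by $y_n(\beta)=s_n(\beta)$ for $\beta\in F_n$ and $y_n(\beta)=0$ otherwise. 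Using the choice of the enumeration and of the sequences again, fix $\alpha<\kappa$ with $I_\alpha=I$ and $y_{p,\alpha,n}=y_n$ for all $n$, and set $x_n=z_{p,\alpha,n}$. By \eqref{eq:z}, for $\beta\in F_n\subseteq I_\alpha$ we have $x_n(\beta)=y_{p,\alpha,n}(\beta)=s_n(\beta)$, so $x_n$ agrees with $s_n$ on $F_n$ and hence $x_n\in U_n$.

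What remains---and this is the crux---is to check that $z_{p,\alpha,\omega}\in Z$ is a $p$-limit of $\{x_n:n\in\N\}$, which, as $Z$ carries the subspace topology, I would verify inside $\Z_2^\kappa$. Since $\Z_2^\kappa$ has the product topology and $p$ is closed under finite intersections, it suffices to show that $\{n\in\N:x_n(\beta)=z_{p,\alpha,\omega}(\beta)\}\in p$ for each individual coordinate $\beta\in\kappa$. If $\beta\in I_\alpha$, then $x_n(\beta)=y_{p,\alpha,n}(\beta)$ and $z_{p,\alpha,\omega}(\beta)=y_{p,\alpha,\omega}(\beta)$; since $y_{p,\alpha,\omega}$ was chosen as a $p$-limit of $\{y_{p,\alpha,n}:n\in\omega\}$ in the compact group $\Z_2^{I_\alpha}$, composing with the projection onto the discrete coordinate group $\Z_2$ yields $\{n:y_{p,\alpha,n}(\beta)=y_{p,\alpha,\omega}(\beta)\}\in p$. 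If $\beta\in\kappa\setminus I_\alpha$, then $x_n(\beta)=f_\beta(p,\alpha,n)$ and $z_{p,\alpha,\omega}(\beta)=f_\beta(p,\alpha,\omega)$, and coherence of $f_\beta$ (Definition \ref{def:coherent}) is precisely the statement that $\{n\in\omega:f_\beta(p,\alpha,n)=f_\beta(p,\alpha,\omega)\}\in p$. I expect this last paragraph to be the only genuine obstacle: the density argument and the bookkeeping placing $x_n$ in $U_n$ are routine, whereas here one must have realised that a single ``column'' $\{z_{p,\alpha,n}:n\in\N\}$ can simultaneously be threaded through all the $U_n$ and $p$-converge, and it is exactly the coherence of the maps $f_\beta$ secured by Theorem \ref{coherent:split} that makes the coordinates outside $I_\alpha$ settle down along $p$.
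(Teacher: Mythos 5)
Your proposal is correct and follows essentially the same route as the paper: choose points $y_n$ in the traces of the $U_n$ on a common countable coordinate set $I$, use the enumeration to find $\alpha$ with $I_\alpha=I$ and $y_{p,\alpha,n}=y_n$, place $x_n=z_{p,\alpha,n}$ in $U_n$, and verify coordinatewise that $z_{p,\alpha,\omega}$ is the required $p$-limit, splitting into the cases $\beta\in I_\alpha$ (where the chosen $p$-limit $y_{p,\alpha,\omega}$ does the work) and $\beta\notin I_\alpha$ (where coherence of $f_\beta$ does). The only cosmetic difference is that the paper obtains density by running the same argument on a constant sequence of open sets, and note that \eqref{eq:z} already covers $n=\omega$ since $X=P\times\kappa\times(\omega+1)$, so no extension of the formula is needed.
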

\begin{proof}
Let $p\in P$ be arbitrary. 
We are going to check that $Z$ is 
strongly
$p$-pseudocompact.
Let $\{U_n:n\in\omega\}$ be a sequence of non-empty basic open subsets of $\Z_2^\kappa$. Then for every $n\in\omega$,
we have $U_n=\prod_{\beta\in\kappa} U_{\beta,n}$, where each 
$U_{\beta,n}$ is a non-empty open subset of $\Z_2$ and 
$\supp(U_n)=\{\beta\in\kappa: U_{\beta,n}\neq \Z_2\}$
is a finite subset of $\kappa$.
Then the set $J=\bigcup_{n\in\omega} \supp(U_n)$
is at most countable, so we can fix a countably infinite subset $I$ of $\kappa$ containing $J$.

For every $n\in\omega$,
$V_n=\prod_{\beta\in I} U_{\beta,n}$ is a non-empty subset of $\Z_2^I$, so we can select $y_n\in V_n$.

By the property of our enumeration, there exists $\alpha\in\kappa$
such that $I_\alpha=I$ and $y_{p,\alpha,n}=y_n$ for all $n\in\omega$.
By \eqref{eq:z}, for every $n\in\omega$, we have
\begin{equation}
\label{eq:16}
z_{p,\alpha,n}(\beta)=y_{p,\alpha,n}(\beta)=y_n(\beta)\in U_{\beta,n}
\text{ for every }\beta\in I_\alpha=I.
\end{equation}
Since $\supp(U_n)\subseteq I$, this implies $z_{p,\alpha,n}\in U_n$ for each $n\in\omega$.
Since $\{z_{p,\alpha,n}:n\in\omega+1\}\subseteq Z$, it suffices to check that $z_{p,\alpha,\omega}$ is a $p$-limit of the sequence $\{z_{p,\alpha,n}:n\in\omega\}$.
To achieve this, one only needs to show that 
$z_{p,\alpha,\omega}(\beta)$ is a $p$-limit of the sequence $\{z_{p,\alpha,n}(\beta):n\in\omega\}$ for every $\beta\in\kappa$.
We consider two cases.

\smallskip
{\em Case 1\/}. $\beta\in I_\alpha$.
Since
the sequence $\{y_{p,\alpha,n}:n\in\omega\}\subseteq\Z_2^{I_\alpha}$ has a $p$-limit $y_{p,\alpha,\omega}\in \Z_2^{I_\alpha}$, it follows that
$y_{p,\alpha,\omega}(\beta)$ is a $p$-limit of the sequence
$\{y_{p,\alpha,n}(\beta):n\in\omega\}$.
Since $\beta\in I_\alpha$, we have 
$z_{p,\alpha,\omega}(\beta)=y_{p,\alpha,\omega}(\beta)$.
Combining this with \eqref{eq:16}, we get the desired conclusion.

\smallskip
{\em Case 2\/}. $\beta\in \kappa\setminus I_\alpha$.
In this case, it follows from \eqref{eq:z} that
$z_{p,\alpha,n}(\beta)=f_\beta(p,\alpha,n)$
for every $n\in\omega+1$,
and the conclusion follows from the fact that $f_\beta$ is coherent. 

To prove that $Z$ is dense in $\Z_2^\kappa$, consider an arbitrary non-empty open subset $U$ of $\Z_2^\kappa$. Applying the above argument to a fixed $p\in P$ and the sequence $\{U_n:n\in\omega\}$, where $U_n=U$ for every $n\in\omega$, we find $\alpha\in\kappa$ such that
$\{z_{p,\alpha,n}:n\in\omega\}\subseteq U$; in particular,
$Z\cap U\not=\emptyset$.
\end{proof}

\begin{claim}
The subgroup $G$ of $\Z_2^\kappa$ generated by $Z$
contains no non-trivial convergent sequences.
\end{claim}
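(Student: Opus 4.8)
The plan is to show that any sequence in $G=\langle Z\rangle$ that converges in $\Z_2^\kappa$ must be eventually constant. Suppose toward a contradiction that $\{g_k:k\in\omega\}$ is a non-trivial convergent sequence in $G$; by passing to a subsequence and translating, we may assume the $g_k$ are pairwise distinct and $g_k\to 0$. Each $g_k$ is a finite sum of generators $z_{p,\alpha,n}\in Z$, hence lies in some $B(X')$ for a countable $X'\subseteq X$; taking the union over all $k$, the whole sequence lives in the subgroup generated by $\{z_{p,\alpha,n}:(p,\alpha,n)\in X_0\}$ for a countable $X_0=P_0\times K_0\times(\omega+1)$. The key observation is that the map sending a finite set $a\in B(X_0)$ to the element $\sum_{(p,\alpha,n)\in a} z_{p,\alpha,n}\in\Z_2^\kappa$ is injective: if $\beta\in\kappa\setminus I_\alpha$ for all $\alpha$ appearing (which we can arrange by choosing a coordinate $\beta$ outside the countably many $I_\alpha$'s in play, using $|[\kappa]^\omega|=\kappa$ — more precisely, one uses the splitting maps $f_\beta$ to separate points), then the $\beta$-coordinate of $\sum_{(p,\alpha,n)\in a} z_{p,\alpha,n}$ records $\sum f_\beta(p,\alpha,n)=\tilde f_\beta(a)$. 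Thus $\{g_k\}$ corresponds to an infinite set $A=\{a_k:k\in\omega\}\subseteq B(X_0)$ of distinct finite subsets of $X_0$ with $\sum_{(p,\alpha,n)\in a_k} z_{p,\alpha,n}\to 0$.

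Now I would apply Theorem \ref{coherent:split} to $A$: since $A$ is infinite, there is a coherent map $f:X\to\Z_2$ that splits $A$, meaning $\tilde f(a_k)=0$ for infinitely many $k$ and $\tilde f(a_k)=1$ for infinitely many $k$. The next step is to find a single coordinate $\beta\in\kappa$ at which the $\beta$-th coordinate of $z_{p,\alpha,n}$ equals $f(p,\alpha,n)$ for every $(p,\alpha,n)\in X_0$; for this one needs $\beta\in\kappa\setminus I_\alpha$ for every $\alpha\in K_0$ and $f_\beta=f$ on $X_0$. The set $\bigcup_{\alpha\in K_0} I_\alpha$ is countable, so $\kappa\setminus\bigcup_{\alpha\in K_0} I_\alpha$ has size $\kappa$; and by the cofinality property of the enumeration $[B(X)]^\omega=\{A_\beta:\beta\in\kappa\}$, together with the freedom in choosing $f_\beta$ — here one must be slightly careful, since $f_\beta$ was chosen to split $A_\beta$, not to equal a prescribed $f$. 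The clean way around this is: let $A'$ be a countable subset of $B(X)$ that both contains $A$ and is ``large enough'' that any coherent map splitting $A'$ restricts on $X_0$ to a map splitting $A$; then pick $\beta\in\kappa\setminus\bigcup_{\alpha\in K_0}I_\alpha$ with $A_\beta=A'$ (possible by cofinality), so that $f_\beta$ splits $A'$ and hence $\tilde f_\beta$ takes both values $0$ and $1$ infinitely often on $A$. Actually the simplest formulation avoids prescribing $f$ altogether: directly choose $\beta\in\kappa\setminus\bigcup_{\alpha\in K_0}I_\alpha$ with $A_\beta=A$, so $f_\beta$ splits $A$.

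With such a $\beta$ fixed, for every $k$ we have $z_{a_k}(\beta):=\big(\sum_{(p,\alpha,n)\in a_k} z_{p,\alpha,n}\big)(\beta)=\sum_{(p,\alpha,n)\in a_k} f_\beta(p,\alpha,n)=\tilde f_\beta(a_k)$, using that every $(p,\alpha,n)$ in $a_k$ has $\alpha\in K_0$ so $\beta\notin I_\alpha$, whence \eqref{eq:z} gives the $\beta$-coordinate of $z_{p,\alpha,n}$ as $f_\beta(p,\alpha,n)$. Since $f_\beta$ splits $A$, the sequence $\{\tilde f_\beta(a_k):k\in\omega\}$ takes the value $0$ infinitely often and the value $1$ infinitely often, so the $\beta$-coordinate of the sequence $\{g_k\}$ does not converge in $\Z_2$ — contradicting $g_k\to 0$ (which forces every coordinate, in particular the $\beta$-th, to converge to $0$). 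This is the contradiction that finishes the proof.

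\textbf{The main obstacle} is the bookkeeping in the middle step: matching up the abstract coherent splitting map guaranteed by Theorem \ref{coherent:split} with an honest coordinate $\beta<\kappa$ of the space $\Z_2^\kappa$ whose values on $Z$ are governed by that very map. This requires using both enumeration properties set up at the start of Section 3 — the cofinal-fiber enumeration of $[B(X)]^\omega$ giving the maps $f_\beta$, and the disjointness of $\beta$ from the relevant $I_\alpha$'s — and checking that the countably many generators involved in the convergent sequence only ever see coordinates $\beta\notin I_\alpha$. Everything else (reducing to $g_k\to 0$ with distinct terms, translating group elements into finite subsets of $X_0$, and reading off a divergent coordinate from a split set) is routine.
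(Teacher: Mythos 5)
Your proposal is correct and follows essentially the same route as the paper: represent each $g_k$ as $\sum_{(p,\alpha,n)\in a_k}z_{p,\alpha,n}$, use the cofinality of the fiber $\{\beta:A_\beta=A\}$ to pick $\beta$ outside the countably many relevant $I_\alpha$'s so that $g_k(\beta)=\tilde{f_\beta}(a_k)$, and read off a non-convergent coordinate from the splitting. The detours in your write-up (reducing to $g_k\to 0$, the injectivity claim, and the first attempt to match a pre-chosen coherent $f$ with some $f_\beta$) are all unnecessary --- the clean formulation you settle on at the end is exactly the paper's argument.
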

\begin{proof}
Consider an arbitrary faithfully indexed sequence $\{g_m:m\in\omega\}\subseteq G$.
Since $G$ is a Boolean group generated by the set $Z$, for every $m\in\omega$ there exists a finite subset $E_m$ of $X$ such that
\begin{equation}
\label{eq:g_m}
g_m=\sum_{(p,\alpha,n)\in E_m} z_{p,\alpha,n}.
\end{equation}
Since the sequence $\{g_m:m\in\omega\}$ is faithfully indexed,  so is the sequence $A=\{E_m:m\in\omega\}\subseteq [X]^{<\omega}=B(X)$. In particular, $A\in [B(X)]^\omega$.
By the choice of our enumeration, 
the set
$B=\{\beta\in\kappa: A_\beta=A\}$ is cofinal in $\kappa$.

Since $E=\bigcup\{E_m:m\in\omega\}$ is at most countable subset of $X$, the set 
\begin{equation}
\label{eq:J}
J=\{\alpha\in\kappa: \exists p\in P\ \exists\ n\in(\omega+1)\ (p,\alpha,n)\in E\}
\end{equation}
 is at most countable as well.
Therefore, 
\begin{equation}
\label{eq:I}
I=\bigcup_{\alpha\in J}I_\alpha
\end{equation}
  is an at most countable subset of $\kappa$. Since $\kappa\ge\cont$ and $B$ is cofinal in $\kappa$,
we can find $\beta\in B\setminus I$.

Let $m\in\omega$ be arbitrary.
Suppose that $(p,\alpha,n)\in E_m$.
Then $\alpha\in J$ by the inclusion $E_m\subseteq E$ and \eqref{eq:J}.
Therefore, $I_\alpha\subseteq I$ by \eqref{eq:I}.
Since $\beta\not\in I$, we conclude that $\beta\in\kappa\setminus I_\alpha$, and thus
$z_{p,\alpha,n}(\beta)=f_\beta(p,\alpha,n)$ by \eqref{eq:z}.
Since this holds for every $(p,\alpha,n)\in E_m$ and $\tilde{f_\beta}$ is a homomorphism, we conclude that
$$
\tilde{f_\beta}(E_m)=\tilde{f_\beta}\left(\sum_{(p,\alpha,n)\in E_m} \{(p,\alpha,n)\}\right)
=
\sum_{(p,\alpha,n)\in E_m}
\tilde{f_\beta}(\{(p,\alpha,n)\})
=
$$
$$
=
\sum_{(p,\alpha,n)\in E_m}
f_\beta(p,\alpha,n)=
\sum_{(p,\alpha,n)\in E_m}
z_{p,\alpha,n}(\beta)
=g_m(\beta),
$$
where the last equality is due to 
\eqref{eq:g_m}.
Since $f_\beta$ splits the sequence $A=\{E_m:m\in\omega\}$,
this means that the set $\{m\in\omega: g_m(\beta)=i\}$ is infinite for both
$i=0,1$. This implies that $\{g_m:m\in\omega\}$ cannot be a convergent sequence in $G$.
\end{proof}

Since $Z\subseteq G\subseteq \Z_2^\kappa$ and $Z$ is dense in $\Z_2^\kappa$ by Claim \ref{claim:1},
$Z$ is dense in $G$.
Since $Z$ is 
strongly
$P$-pseudocompact by Claim \ref{claim:1},
so is $G$.
\qed

\section{An application to free precompact Boolean groups}
\label{sec:applications}

In this section we give an interpretation of Theorem \ref{coherent:split} in terms of free precompact Boolean groups
over disjoint sums of countable maximal spaces.

Recall that a topological group is {\em precompact\/} if it is a subgroup of some compact group, or equivalently, if its completion is compact. The class of all precompact Boolean groups forms a variety $\mathscr{V}$ of topological groups
\cite{Hi, Mo}.
Therefore,
given a topological space $X$, there exists the free object of $X$ 
in $\mathscr{V}$ \cite{Mo-survey, CvM} which we shall call the
{\em free precompact Boolean group\/} $\mathit{FPB}(X)$ of $X$.

A description of this object as the reflection of the free (Abelian) topological group of a space $X$ in the class of precompact Boolean groups can be found in \cite[Section 9]{SS}.
Here is another description.

\begin{lemma}
\label{desscription:of:topology}
Let $X$ be a zero-dimensional topological space and let $\mathscr{F}$ be the family of all continuous maps $f:X\to\Z_2$ from $X$ to the group $\Z_2$ endowed with the discrete topology. Consider the initial topology $\mathscr{T}$ on $B(X)$ with respect to the family $\tilde{\mathscr{F}}=\{\tilde{f}: f\in\mathscr{F}\}$ of homomorphisms; that is, the family, $\{\tilde{f}^{-1}(i): f\in\mathscr{F}, i=0,1\}$ forms a subbasis for the topology $\mathscr{T}$.
Then $\mathit{FPB}(X)$ is topologically isomorphic to $(B(X),\mathscr{T})$ and $\mathscr{T}$ induces the original topology on $X$ after identification of $X$ with the subspace $\{\{x\}:x\in X\}$ of $B(X)$.
\end{lemma}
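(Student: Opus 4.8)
The plan is to show that the topological group $(B(X),\mathscr{T})$, equipped with the map $\eta\colon X\to B(X)$ sending $x$ to $\{x\}$, enjoys the universal property characterising the free precompact Boolean group of $X$: namely, $(B(X),\mathscr{T})$ belongs to the variety $\mathscr{V}$ of precompact Boolean groups, $\eta$ is a topological embedding, and every continuous map $g\colon X\to G$ into a group $G\in\mathscr{V}$ factors as $g=\bar g\circ\eta$ for a unique continuous homomorphism $\bar g\colon(B(X),\mathscr{T})\to G$. Since the free object of $X$ in $\mathscr{V}$ is unique up to a topological isomorphism commuting with the canonical maps from $X$, this will identify $\mathit{FPB}(X)$ with $(B(X),\mathscr{T})$ and, at the same time, establish the assertion about the topology induced on $X$.

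The easy part consists of the first two requirements. The topology $\mathscr{T}$ is a group topology because it is the initial topology with respect to a family of homomorphisms of $B(X)$ into the topological group $\Z_2$. For precompactness, observe that for every finite $F\subseteq\mathscr{F}$ the homomorphism $B(X)\to\Z_2^{F}$ with coordinates $\tilde f$, $f\in F$, is continuous and has kernel $\bigcap_{f\in F}\tilde f^{-1}(0)$, a basic neighbourhood of $0$ of finite index; so finitely many of its translates cover $B(X)$, and $(B(X),\mathscr{T})$ is totally bounded. (If $\mathscr{V}$ is understood to consist of Hausdorff groups, note that zero-dimensionality of $X$ makes the clopen subsets of $X$, hence the homomorphisms $\tilde f$, separate points, so $(B(X),\mathscr{T})$ is in addition Hausdorff.) Next, $\eta^{-1}(\tilde f^{-1}(i))=f^{-1}(i)$ is open in $X$, so $\eta$ is continuous; and, under the identification of $X$ with $\eta(X)$, the trace of the subbasic set $\tilde f^{-1}(i)$ on $\eta(X)$ corresponds to $f^{-1}(i)\subseteq X$, so the subspace topology on $X$ is the initial topology with respect to $\mathscr{F}$. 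That topology is contained in the original one (each $f\in\mathscr{F}$ is continuous) and contains it (each clopen $C\subseteq X$ equals $\chi_C^{-1}(1)$ with $\chi_C\in\mathscr{F}$, and clopen sets form a base for $X$); hence $\eta$ is a topological embedding.

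The substantive part is the universal property. Given a continuous $g\colon X\to G$ with $G\in\mathscr{V}$, let $K$ be a compact Boolean group containing $G$ as a topological subgroup, say the completion of $G$; by Pontryagin duality every compact Boolean group is topologically isomorphic to a power $\Z_2^{\lambda}$, so we may take $K=\Z_2^{\lambda}$ with coordinate projections $\pi_\xi\colon K\to\Z_2$. Each $f_\xi=\pi_\xi\circ\iota\circ g\colon X\to\Z_2$ (where $\iota\colon G\hookrightarrow K$ is the inclusion) is continuous, hence lies in $\mathscr{F}$. Since $B(X)$ is the free Boolean group over $X$, $g$ extends to a unique abstract homomorphism $\bar g\colon B(X)\to G\subseteq K$; for each $\xi<\lambda$ the homomorphism $\pi_\xi\circ\bar g\colon B(X)\to\Z_2$ extends $f_\xi$, so it equals $\widetilde{f_\xi}$ by uniqueness of the homomorphic extension, and is therefore $\mathscr{T}$-continuous. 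As $K$ carries the initial topology determined by the projections $\pi_\xi$, the map $\bar g\colon(B(X),\mathscr{T})\to K$ is continuous, and since it takes values in $G$, which has the subspace topology, $\bar g\colon(B(X),\mathscr{T})\to G$ is continuous. Uniqueness is clear, because any continuous homomorphism extending $g$ must agree with $\bar g$ on the generating set $\eta(X)$. (One can avoid duality here by using that a precompact Boolean group has a base at $0$ of open subgroups of finite index, whence it embeds into a product of finite powers of $\Z_2$, and then running the same coordinatewise argument.)

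The step I expect to be the crux is the continuity of $\bar g$: it rests entirely on the fact that an arbitrary precompact Boolean group sits topologically inside a power of $\Z_2$, which is precisely what lets the defining subbasis $\{\tilde f^{-1}(i):f\in\mathscr{F},\ i=0,1\}$ of $\mathscr{T}$ control all continuous homomorphisms out of $(B(X),\mathscr{T})$. The remaining verifications are routine.
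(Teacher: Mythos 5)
Your proposal is correct and follows essentially the same route as the paper: the paper merely lists the four key ingredients (completions of precompact Boolean groups are powers of $\Z_2$, the variety is generated by $\Z_2$, the diagonal map of $\mathscr{F}$ embeds the zero-dimensional $X$, and the image is $\mathcal{A}_2$-independent) and leaves the ``standard argument'' to the reader, which is exactly the universal-property verification you carry out in detail. Your write-up correctly supplies that argument, including the point the paper's fact (iv) encodes, namely that the maps $\tilde f$ separate the points of $B(X)$ (not just of $X$), which gives Hausdorffness.
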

\begin{proof}
Let us first notice four facts.
\begin{itemize}
\item[(i)] The completion $K$ of a precompact Boolean group is a compact Boolean group, and the standard facts of the duality theory imply that $K$ is topologically isomorphic to the Cartesian product $\Z_2^\tau$ for some cardinal $\tau$.
\item[(ii)] It follows from (i) that the variety $\mathscr{V}$ of precompact Boolean groups is generated by the single topological group $\Z_2$ (taken with the discrete topology).
\item[(iii)] Since $X$ is zero-dimensional, the family $\mathscr{F}$ separates points and closed subsets of $X$, so the diagonal map 
$j$ of the family $\mathscr{F}$ is a homeomorphic embedding of $X$ 
into $\Z_2^\mathscr{F}$.
\item[(iv)] Since $X$ is zero-dimensional, $j(X)$ is $\mathcal{A}_2$-independent subset of $\Z_2^\mathscr{F}$, where $\mathcal{A}_2$ is the algebraic variety of groups of order $2$; see \cite[Section 5]{DS}.
\end{itemize}

The conclusion of our lemma now 
follows from these facts via a standard argument.
\end{proof}

We refer the reader to \cite[Section 2]{Sh} for properties of free precompact (Abelian) groups and \cite{Si} for those of free precompact Boolean groups.

For simplicity, we shall say that a space $E$ is {\em elementary\/}
if it is homeomorphic to a subspace of the form $\N\cup\{p\}$ of $\beta\N$, where $p\in\beta\N\setminus\N$. Alternatively, 
an elementary space is a countably infinite space $E$ with a single non-isolated point $p$ such that the trace of the filter of neighbourhoods of $p$ on the discrete part  $D=E\setminus \{p\}$ 
of $E$ is an ultrafilter on $D$.

Recall that a space is {\em maximal\/} if it is non-discrete yet any strictly stronger topology on it is discrete. 
One can easily see that elementary spaces are precisely 
the countably infinite maximal spaces.

The alternative reformulation of Theorem \ref{coherent:split} is the following

\begin{theorem}
\label{sequences:in:free:precompact:Boolean:groups}
The free precompact Boolean group $\mathit{FPB}(\bigoplus_{i\in I} E_i)$
of any
topological sum $\bigoplus_{i\in I} E_i$ of elementary spaces $E_i$ ($i\in I$) contains no non-trivial convergent sequences.
\end{theorem}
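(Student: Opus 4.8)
I want to reduce Theorem~\ref{sequences:in:free:precompact:Boolean:groups} to Theorem~\ref{coherent:split}, using the concrete description of $\mathit{FPB}$ from Lemma~\ref{desscription:of:topology}. Write $Y=\bigoplus_{i\in I}E_i$; being a topological sum of zero-dimensional spaces, $Y$ is itself zero-dimensional, so Lemma~\ref{desscription:of:topology} applies and $\mathit{FPB}(Y)\cong(B(Y),\mathscr{T})$, where $\mathscr{T}$ is the initial topology generated by $\{\tilde f^{-1}(i):f\in\mathscr F,\ i=0,1\}$ with $\mathscr F$ the continuous maps $Y\to\Z_2$. The first step is to identify the underlying set $Y$ with a set of the form $X=P\times K\times(\omega+1)$ as in Definition~\ref{def:coherent}. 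For each $i\in I$ let $p_i\in\beta\N\setminus\N$ be the ultrafilter witnessing that $E_i$ is elementary and enumerate the isolated points of $E_i$ as $\{e_i^n:n\in\omega\}$ with the non-isolated point called $e_i^\omega$; after renaming, set $P=\{p_i:i\in I\}$ (collapsing repetitions only if convenient, but it is harmless to keep $P$ indexed by $I$) and, with $K=I$, define a bijection $Y\to X:=P\times I\times(\omega+1)$ by $e_i^n\mapsto(p_i,i,n)$ for $n\in\omega+1$. Under this bijection $X^*=P\times I\times\{\omega\}$ corresponds exactly to the set of non-isolated points of $Y$.

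**Matching "coherent" with "continuous".** The crucial observation, which I would isolate as a small lemma, is that under the above identification a map $f\colon X\to\Z_2$ is coherent in the sense of Definition~\ref{def:coherent} if and only if the corresponding map $Y\to\Z_2$ is continuous. Indeed, a map on $Y$ is continuous iff its restriction to each summand $E_i$ is continuous (summands are clopen), and a $\Z_2$-valued function on the elementary space $E_i$ is continuous iff it is eventually constant along the isolated points with respect to the ultrafilter $p_i$ and that eventual value equals the value at the non-isolated point --- i.e. $\{n:f(p_i,i,n)=f(p_i,i,\omega)\}\in p_i$. That is precisely condition~\eqref{eq:coherent}. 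Hence $\mathscr F$, transported to $X$, is exactly the set of coherent maps $X\to\Z_2$, and $\mathscr{T}$ is the initial topology on $B(X)$ generated by $\{\tilde f^{-1}(i): f\text{ coherent},\ i=0,1\}$.

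**Killing convergent sequences.** Now suppose, for contradiction, that $\{g_m:m\in\omega\}$ is a non-trivial convergent sequence in $\mathit{FPB}(Y)=(B(X),\mathscr T)$, say $g_m\to g$. Passing to $\{g_m+g:m\in\omega\}$ we may assume the limit is $0$ and, discarding a point if needed, that the $g_m$ are pairwise distinct and all nonzero; thus $A=\{g_m:m\in\omega\}$ is an infinite subset of $B(X)$. By Theorem~\ref{coherent:split} there is a coherent map $f\colon X\to\Z_2$ splitting $A$, so both $\{m:\tilde f(g_m)=0\}$ and $\{m:\tilde f(g_m)=1\}$ are infinite. But $f$ is coherent, hence continuous on $Y$, so $\tilde f\colon (B(X),\mathscr T)\to\Z_2$ is a continuous homomorphism; therefore $\tilde f(g_m)\to\tilde f(0)=0$ in the discrete group $\Z_2$, forcing $\tilde f(g_m)=0$ for all large $m$ --- contradicting that $\tilde f$ takes the value $1$ infinitely often on $A$. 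This contradiction completes the proof.

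**Where the work is.** The only non-routine point is the lemma identifying coherent maps with continuous $\Z_2$-valued maps on $\bigoplus_i E_i$; everything else --- zero-dimensionality of a topological sum, clopenness of summands, the characterization of continuity on an elementary space via its defining ultrafilter, and the fact that a continuous homomorphism into a discrete group is eventually constant on a convergent sequence --- is standard. I would also remark that one should double-check the harmless bookkeeping when several $E_i$ share the same ultrafilter $p$: keeping the index set $K=I$ rather than $P$ (as Definition~\ref{def:coherent} in fact allows, with an arbitrary non-empty set $K$) avoids any collision, so no genuine difficulty arises there.
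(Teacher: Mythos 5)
Your overall strategy---identify continuous $\Z_2$-valued maps on $\bigoplus_i E_i$ with coherent maps, invoke Theorem~\ref{coherent:split} to split the range of a putative convergent sequence, and derive a contradiction from continuity of $\tilde f$---is exactly the paper's strategy, and the ``coherent $=$ continuous'' lemma and the final splitting argument are both correct. However, there is a genuine gap in your identification step. The map $e_i^n\mapsto(p_i,i,n)$ is injective, but its image is $\{(p_i,i,n): i\in I,\ n\in\omega+1\}$, which is a \emph{proper} subset of the full Cartesian product $P\times I\times(\omega+1)$ whenever the ultrafilters $p_i$ are not all equal: for $j\neq i$ with $p_j\neq p_i$, the point $(p_j,i,n)$ lies in $P\times I\times(\omega+1)$ but corresponds to no point of $Y$. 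So your claimed bijection does not exist in general, and since Definition~\ref{def:coherent} and Theorem~\ref{coherent:split} are stated only for sets of the exact form $P\times K\times(\omega+1)$, the theorem does not apply verbatim to $B(Y)$. Your closing remark misdiagnoses the danger: several $E_i$ sharing the same ultrafilter is indeed harmless (the index $K=I$ separates them), but the real obstruction is the case of \emph{distinct} ultrafilters.

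The fix is short and is precisely what the paper does: embed $Y$ as the clopen sub-sum $\bigoplus_{i\in I}E_{p_i,i}$ of the full sum $X=\bigoplus_{(p,i)\in(\beta\N\setminus\N)\times I}E_{p,i}$, note that every continuous $\Z_2$-valued map on the image of $Y$ extends to a continuous (i.e.\ coherent) map on $X$, so that $\mathit{FPB}(Y)$ is a topological subgroup of $\mathit{FPB}(X)$; then apply Theorem~\ref{coherent:split} inside $B(X)$ to an infinite $A\subseteq B(Y)$ and restrict the resulting splitting homomorphism back to $B(Y)$. With that one additional reduction your argument goes through; the remaining steps (translating the limit to $0$, continuity of $\tilde f$ into discrete $\Z_2$) are fine and only cosmetically different from the paper's clopen-partition phrasing.
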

\begin{proof}
First, we prove our theorem in the special case.

Let $X$ be the set as in Definition \ref{def:coherent}
and $X^*$ be as in Definition \ref{def:X*}. Introduce the following topology on $X$. Declare each point of $X\setminus X^*$ to be isolated. A basic open neighbourhood of a point $(p,k,\omega)\in X^*$ is of the form $\{(p,k,\omega)\}\cup\{(p,k,n):n\in F\}$ for a given element $F\in p$. Note that each 
$E_{p,k}=\{(p,k,n):n\in\omega+1\}$ for $(p,k)\in P\times K$ 
is a clopen subset of $X$ homeomorphic to the elementary space 
$\N\cup\{p\}$, so 
$X=\bigoplus_{(p,k)\in P\times K} E_{p,k}$
is a topological sum of elementary spaces
$E_{p,k}$.

It is straightforward to check that, when $X$ is equipped with the topology described above, a map $f:X\to\Z_2$ is continuous if and only if it is coherent in the sense of Definition
\ref{def:coherent}.
Therefore, the family $\mathscr{F}$
in Lemma \ref{desscription:of:topology} is precisely the family of all 
coherent maps from $X$ to $\Z_2$.

Now, let $A$ be an infinite subset of $\mathit{FPB}(X)$.
By Lemma \ref{desscription:of:topology}, we can identify $\mathit{FPB}(X)$ with $(B(X),\mathscr{T})$, where $\mathscr{T}$ is the topology on $B(X)$ described in this lemma.
After this identification, we can think of $A$ as being an infinite subset of $B(X)$. By Theorem \ref{coherent:split},
$A$ is split by some coherent map $f:X\to\Z_2$. By the previous paragraph, $f\in\mathscr{F}$.
Therefore, 
$B(X)=U_0\cup U_1$
is a $\mathscr{T}$-clopen partition of $B(X)$,
where
$U_i=\tilde{f}^{-1}(i)$ for $i=0,1$.
Since $A$ is split by $f$, the intersection $A\cap U_i$ is infinite for both $i=0,1$. This shows that $A$ cannot be a convergent sequence in $(B(X),\mathscr{T})$.

Having this particular case proved, consider now a general case of a topological 
sum $E=\bigoplus_{i\in I} E_i$ of elementary spaces $E_i$.
For $i\in I$, let $E_i=\{(i,n):n\in\omega+1\}$ where each point 
$(i,n)$ for $n\in\omega$ is isolated and the point $(i,\omega)$ is non-isolated; let $p_i\in\beta\N\setminus\N$ be the filter on $\N$ obtained by ``copying over $\N$'' the trace on $\{(i,n):n\in\omega\}$ of the filter of neighbourhoods of  the non-isolated point $(i,\omega)$ in $E_i$.

In the special case, take $P=\beta\N\setminus\N$ and $K=I$;
that is, consider the topological sum
$$
X=\bigoplus_{(p,i)\in(\beta\N\setminus\N)\times I} E_{p,i}.
$$
Let $h:E\to X$ be the natural homeomorphic embedding which sends each point $(i,n)\in E_i$ to 
$(p_i,i,n)\in E_{p_i,i}$ for $i\in I$ and $n\in\omega+1$.
Since every bounded continuous map $g:h(E)\to\Z_2$ admits a continuous extension $\hat{g}:X\to\Z_2$ over $X$,
it easily follows from Lemma \ref{desscription:of:topology}
that $\mathit{FPB}(E)$ is a subgroup of $\mathit{FPB}(X)$.
Since we have already proved that the bigger group $\mathit{FPB}(X)$ contains no non-trivial convergent sequences, the same conclusion holds for $\mathit{FPB}(E)$.
\end{proof}

It should be noted that Theorem \ref{sequences:in:free:precompact:Boolean:groups}
is not completely trivial, as the completion of any precompact
group is a compact group, and compact groups contain many non-trivial convergent sequences.

In fact, even a stronger conclusion holds in Theorem \ref{sequences:in:free:precompact:Boolean:groups}.
\begin{corollary}
The free precompact Boolean group $G=\mathit{FPB}(\bigoplus_{i\in I} E_i)$
of any
topological sum $\bigoplus_{i\in I} E_i$ of elementary spaces $E_i$ ($i\in I$) contains no infinite compact subsets.
\end{corollary}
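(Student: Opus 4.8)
The plan is to upgrade the statement about convergent sequences in Theorem~\ref{sequences:in:free:precompact:Boolean:groups} to a statement about infinite compact subsets, using exactly the same splitting machinery. The key observation is that the proof of Theorem~\ref{sequences:in:free:precompact:Boolean:groups} never really used that $A$ was indexed as a sequence: it showed that \emph{every} infinite subset $A$ of $\mathit{FPB}(X)$ can be partitioned into two infinite pieces by a $\mathscr{T}$-clopen set $\tilde{f}^{-1}(0)\cup\tilde{f}^{-1}(1)$, where $f$ is a coherent (equivalently, continuous) map. This property — that every infinite subset has an infinite subset which is relatively clopen but not closed — is precisely what rules out infinite compact (indeed infinite countably compact) subsets.

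So the first step is to reduce, exactly as in the proof of Theorem~\ref{sequences:in:free:precompact:Boolean:groups}, to the special case $G=\mathit{FPB}(X)$ with $X=\bigoplus_{(p,k)\in P\times K}E_{p,k}$ as in Definition~\ref{def:coherent}, since a topological subgroup of a group with no infinite compact subsets again has no infinite compact subsets, and $\mathit{FPB}(E)$ embeds as a subgroup of $\mathit{FPB}(X)$ for $P=\beta\N\setminus\N$ and $K=I$. Identify $G$ with $(B(X),\mathscr{T})$ via Lemma~\ref{desscription:of:topology}, so that $\mathscr{F}$ is the family of coherent maps $X\to\Z_2$ and the sets $\tilde{f}^{-1}(i)$ for $f\in\mathscr{F}$, $i=0,1$, are clopen in $\mathscr{T}$. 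The second step: suppose for contradiction that $S\subseteq G$ is an infinite compact subset, and pick any countably infinite subset $A\subseteq S$. By Theorem~\ref{coherent:split}, there is a coherent $f:X\to\Z_2$ splitting $A$, so both $A\cap\tilde{f}^{-1}(0)$ and $A\cap\tilde{f}^{-1}(1)$ are infinite. Since $\tilde{f}^{-1}(0)$ and $\tilde{f}^{-1}(1)$ are disjoint open subsets of $G$, the set $A$ — and hence $S$, which contains $A$ — is covered by two open sets each meeting $A$ in an infinite set. Iterating, or more directly: pick an infinite sequence $\{a_n:n\in\omega\}$ inside $A\cap\tilde{f}^{-1}(0)$ (still infinite, still a subset of the compact set $S$), and repeat the construction on this sequence to obtain a coherent $f_1$ splitting it, and so on. One then builds an infinite discrete-in-itself subset of $S$ whose closure is not contained in $S$, or one simply notes that $S$ contains a faithfully indexed sequence $\{a_n\}$ such that for cofinitely clever choices the clopen-valued maps separate the $a_n$ into a pattern with no cluster point.

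The cleanest way to finish, and the step I would actually write, is this: from $A=\{a_n:n\in\omega\}\subseteq S$, use Theorem~\ref{coherent:split} to get a coherent $f$ splitting $A$; then $V=\tilde{f}^{-1}(0)$ is clopen in $G$, and $\overline{A}^G\cap V\supseteq A\cap V$ is infinite while $\overline{A}^G\setminus V\supseteq A\setminus V$ is infinite. Thus no single point of $G$ can be the unique cluster point of $A$ inside any clopen set, and one concludes that $\overline{A}^G$ is not compact. More precisely, the standard fact is: if every countably infinite subset of a space $Y$ can be partitioned by a clopen subset of $Y$ into two infinite pieces, then $Y$ has no infinite countably compact subspaces — because an infinite countably compact $T_1$ space contains a countably infinite subset $A$ with a cluster point $x$, and then for the clopen $V$ of $Y$ splitting $A$, $x$ lies in exactly one of $V$, $Y\setminus V$, say $x\in V$; but then $A\setminus V$ is an infinite subset of the countably compact $S$ with a cluster point $y\in S$, and $y\notin V$; now re-split $A\setminus V$, and repeat — this produces an infinite strictly decreasing chain of clopen sets and a faithfully indexed sequence in $S$ with no cluster point in $S$, a contradiction. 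Since compact implies countably compact, this gives the corollary.

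The main obstacle is purely bookkeeping: making the iteration "re-split the infinite leftover, shrink the clopen set, repeat" converge to an honest witness sequence in $S$ with no cluster point. This is a routine diagonalization — at stage $n$ one has an infinite $A_n\subseteq S$ and a clopen $V_n$ with $A_{n+1}=A_n\setminus V_{n}$ infinite (using Theorem~\ref{coherent:split} on $A_n$), one picks $a_n\in A_n\cap V_n$, and then $\{a_n:n\in\omega\}$ has no cluster point in $S$ because any candidate $z\in S$ lies outside $V_n$ for all large $n$ (as $z\in\overline{\{a_k:k\ge n\}}\subseteq\overline{A_n}\subseteq\overline{X\setminus\bigcup_{k<n}V_k}$, which is disjoint from each $V_k$, $k<n$, these being clopen), so $V_n$ is a neighbourhood of $a_n$ missing all $a_k$ with $k>n$ — wait, more carefully, $z\notin V_k$ for $k<n$ forces $z$'s neighbourhoods eventually to avoid $a_k$; the bound on cluster points follows since the $a_n$ are spread across pairwise disjoint clopen sets once one passes to $V_n\setminus V_{n+1}$-type pieces. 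I will present this last diagonalization in full detail, as it is the only non-formal point; everything else is an immediate quotation of Theorem~\ref{coherent:split} and Lemma~\ref{desscription:of:topology}.
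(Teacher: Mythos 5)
Your reduction to the special case and your appeal to Theorem \ref{coherent:split} are fine, but the final step rests on a ``standard fact'' that is false: it is \emph{not} true that a space in which every countably infinite subset is split into two infinite pieces by a clopen set has no infinite compact (or countably compact) subspaces. The space $\beta\N$ is a counterexample: it is compact and infinite, yet every countably infinite subset $\{p_n:n\in\omega\}$ of $\beta\N$ is split by some clopen set $\overline{B}$ with $B\subseteq\N$ (if no $B$ split it, then $q=\{B\subseteq\N: \{n\in\omega: B\in p_n\}\ \text{is cofinite}\}$ would be an ultrafilter to which the faithfully indexed sequence $(p_n)$ converges, contradicting the absence of non-trivial convergent sequences in $\beta\N$). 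Correspondingly, your diagonalization only produces a sequence $\{a_n\}$ that is \emph{discrete in itself}: each $a_n$ has a neighbourhood $V_n$ meeting $\{a_m:m\in\omega\}$ in a finite set. That excludes cluster points lying in some $V_n$, but a cluster point $z$ of $\{a_n\}$ supplied by compactness of $S$ will simply lie in $S\setminus\bigcup_n V_n$, and nothing in your argument prevents every neighbourhood of such a $z$ from containing infinitely many $a_n$ --- exactly as every point of $\beta\N\setminus\N$ is a cluster point of the sequence $0,1,2,\dots$, even though each term $n$ has the clopen neighbourhood $\{n\}$. So the clopen-splitting property alone cannot yield the corollary, and the ``routine diagonalization'' you defer cannot be completed.

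The paper closes this gap with an ingredient you do not use: any countable subset of $G$ is contained in $H=\mathit{FPB}(\bigoplus_{i\in J}E_i)$ for some countable $J\subseteq I$, and $H$ is a \emph{countable closed} subgroup of $G$ (because $\bigoplus_{i\in J}E_i$ is clopen in $\bigoplus_{i\in I}E_i$). Hence, replacing a putative infinite compact $K$ by the closure of a countable infinite subset of it, one gets $K\subseteq H$, so $K$ is countable, hence metrizable, hence contains a non-trivial convergent sequence, contradicting Theorem \ref{sequences:in:free:precompact:Boolean:groups}. If you wish to salvage your approach, you must incorporate some such cardinality/metrizability reduction; the splitting machinery by itself only rules out convergent sequences, not infinite compacta.
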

\begin{proof}
Suppose that $K$ is an infinite compact subset of $G$.
Choose a countably infinite subset $S$ of $K$. Then its closure in $G$ is compact as well, so without loss of generality we may assume that $S$ is dense in $K$. Since $S$ is countable,
there exists an at most countable set $J\subseteq I$ such that
$S\subseteq H=\mathit{FPB}(\bigoplus_{i\in J} E_i)$.
Since $\bigoplus_{i\in J} E_i$ is clopen in $\bigoplus_{i\in I} E_i$,
$H$ is a closed subgroup of $G$. Since $S\subseteq H$ and $S$ is dense in $K$, it follows that $K\subseteq H$.
Since the group $H$ is countable, so is $K$. Therefore, the compact space $K$ must be metrizable. As it is infinite, it must contain a non-trivial convergent sequence. We have found a non-trivial convergent sequence in $G$, in contradiction with Theorem 
\ref{sequences:in:free:precompact:Boolean:groups}.
This contradiction shows that all compact subsets of $G$ are finite.
\end{proof}

\end{document}